\newtheorem{thm}{Theorem}[section]
\newtheorem{Lemma}[thm]{Lemma}
\title{\vspace{0cm}{3-Factor-criticality of vertex-transitive graphs\thanks{This
work is supported by NSFC (grant no. 10831001)}} }
\author{Heping Zhang\footnote{Corresponding author.},  Wuyang Sun
\\\small{School of Mathematics and Statistics, Lanzhou University, Lanzhou, Gansu 730000, P. R. China}
\\\small{E-mail addresses: zhanghp@lzu.edu.cn, sunwy09@lzu.edu.cn}}
\date{}
\begin{document}
\pagestyle{plain}
\pagenumbering{arabic}

\maketitle \thispagestyle{empty}
\begin{abstract}
A graph of order $n$ is {\em $p$-factor-critical}, where $p$ is an integer of the same parity as $n$, if the removal of any set of $p$ vertices results in a graph with a perfect matching. 1-Factor-critical graphs and 2-factor-critical graphs are factor-critical graphs and bicritical graphs, respectively. It is well known that every connected vertex-transitive graph of odd order is factor-critical and every connected non-bipartite vertex-transitive graph of even order is bicritical. In this paper, we show that a simple connected vertex-transitive graph of odd order at least 5 is 3-factor-critical if and only if it is not a cycle.
\end{abstract}

\textbf{MSC 2010:} 05C70

\textbf{Keywords:} vertex-transitive graph; factor-criticality; matching; connectivity

\section{Introduction}
Only finite and simple graphs are considered in this paper. Let $G=(V(G),E(G))$ be a graph with vertex-set $V(G)$ and edge-set $E(G)$. The {\em order} of $G$ is the cardinality of $V(G)$. $G$ is called {\em odd} if its order is odd. A {\em perfect matching} of $G$ is a set of independent edges covering all the vertices in $G$. A graph $G$ with a perfect matching is {\em elementary} if its allowed edges (which are these edges contained in some perfect matchings of $G$) form a connected subgraph. The concepts of factor-critical and bicritical graphs were introduced by Gallai \cite{Gallai} and by Lov\'{a}sz \cite{lovasz}, respectively. A graph $G$ is called {\em factor-critical} if the removal of any vertex of $G$ results in a graph with a perfect matching. A graph is called {\em bicritical} if the removal of any pair of distinct vertices of $G$ results in a graph with a perfect matching. A graph $G$ is said to be {\em vertex-transitive} if for any two vertices $x$ and $y$ in $G$ there is an automorphism $\varphi$ of $G$ such that $y=\varphi(x)$. In \cite{plummer} (see Theorem 5.5.24 in Chapter 5), there is a following result.

\begin{thm}[\cite{plummer}]\label{1.1} If $G$ is a connected vertex-transitive graph of order $n$, then

(a) if $n$ is odd, $G$ is factor-critical, while

(b) if $n$ is even, $G$ is either elementary bipartite or bicritical.
\end{thm}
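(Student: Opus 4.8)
The engine of the whole argument is that every canonical object attached to matchings is invariant under $\mathrm{Aut}(G)$, and that a transitive group cannot fix a nonempty proper vertex-subset. An automorphism carries maximum matchings to maximum matchings, hence preserves the set $D(G)$ of vertices missed by at least one maximum matching, and therefore the whole Gallai--Edmonds partition $(D(G),A(G),C(G))$. Since $G$ is vertex-transitive, $\mathrm{Aut}(G)$ is transitive on $V(G)$, so each of these three invariant sets is either empty or all of $V(G)$; because $A(G)$ is disjoint from $D(G)$ yet consists only of neighbours of $D(G)$, $A(G)=V(G)$ would force $D(G)=\varnothing$ and hence $A(G)=\varnothing$, absurd. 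So either $D(G)=V(G)$ (with $A(G)=C(G)=\varnothing$) or $C(G)=V(G)$ (with $D(G)=A(G)=\varnothing$). This single dichotomy drives both parts.

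For (a), if $n$ is odd then $G$ has no perfect matching, so its deficiency is positive, $D(G)\neq\varnothing$, and hence $D(G)=V(G)$. By the Gallai--Edmonds theorem every component of $G[D(G)]=G$ is factor-critical, and connectivity makes $G$ itself factor-critical. For (b), if $n$ is even and $G$ had no perfect matching the same reasoning would give $D(G)=V(G)$ and $G$ factor-critical of even order, which is impossible; hence $C(G)=V(G)$ and $G$ has a perfect matching. What remains, and this is the real content of (b), is the dichotomy \emph{elementary bipartite versus bicritical}.

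I would run this on the notion of barriers. Since $G$ has a perfect matching, a short Tutte--Berge argument shows that $G$ is bicritical if and only if it has no barrier $B$ with $|B|\ge 2$, a barrier being a set $B$ with $o(G-B)=|B|$: a barrier of size $\ge 2$ lets one delete two of its vertices and leave too many odd components, and conversely a pair whose deletion destroys the perfect matching produces, via Tutte, a barrier of size $\ge 2$. So it suffices to show that if $G$ possesses a barrier of size at least two, then $G$ is elementary bipartite. Fix a maximal such barrier $B$; then $G-B$ has exactly $|B|$ components, each odd and factor-critical. The plan is to use vertex-transitivity to force every one of these components to be a single vertex and $B$ to be independent: an automorphism carrying a chosen vertex to another moves $B$ to a second maximal barrier, and comparing the two barriers through their component structure should rule out any component of size $\ge 3$ and any edge inside $B$. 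Once the components are singletons and $B$ is independent, $B$ together with those singletons exhibits a bipartition in which every edge joins $B$ to a component; connectedness makes this the unique bipartition, and the near-perfect-matching structure forces every edge to be allowed, so $G$ is elementary bipartite.

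The clean part is the Gallai--Edmonds-plus-transitivity collapse that yields (a) and the existence of a perfect matching in (b); I expect the real obstacle to be the last step, namely showing that in a vertex-transitive graph a barrier of size $\ge 2$ forces bipartiteness. This is exactly where vertex-transitivity is indispensable: without it one can build connected non-bipartite graphs (for instance two disjoint triangles together with two vertices joined to all six) that have a perfect matching, a two-element barrier, and are even elementary, so the conclusion genuinely fails. The argument must therefore exploit the homogeneity of $G$, that interior vertices of a factor-critical component and vertices of $B$ are interchangeable by automorphisms, to collapse the barrier's components to singletons.
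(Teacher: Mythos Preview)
The paper does not prove this theorem at all; it is quoted from Lov\'asz--Plummer \emph{Matching Theory} (Theorem~5.5.24) and used as a black box. So there is no ``paper's own proof'' to compare against. Your Gallai--Edmonds argument for part~(a), and for the existence of a perfect matching in part~(b), is exactly the standard one and is complete as written.

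For the dichotomy in (b), however, what you have is a plan rather than a proof, and you say so yourself. The gap is precisely the sentence ``comparing the two barriers through their component structure should rule out any component of size~$\ge 3$ and any edge inside~$B$.'' Two maximal barriers can overlap in complicated ways, and nothing you have said explains why transitivity forces the components to be singletons. The standard way to close this (and the one used in Lov\'asz--Plummer) is to first prove that $G$ is elementary (1-extendable), and then invoke the canonical partition of an elementary graph into classes of the relation $u\sim v \iff G-u-v$ has no perfect matching; these classes are exactly the maximal barriers, they partition $V(G)$, and the partition is $\mathrm{Aut}(G)$-invariant. Transitivity then forces all classes to have the same size. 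If that common size is~$1$, $G$ is bicritical; if it is~$\ge 2$, one shows the classes give a bipartition (and there can only be two of them). Your sketch is missing both the elementarity step and this canonical-partition machinery, without which ``compare two maximal barriers'' does not obviously go through.
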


Favaron \cite{Favarvon} and Yu \cite{Yu} introduced, independently, a concept of $p$-factor-critical graphs, which is a generalization  of the concepts of factor-critical and bicritical graphs. A graph $G$ is said to be {\em $p$-factor-critical}, where $p$ is an integer of the same parity as $n$, if the removal of any set of $p$ vertices results in a graph with a perfect matching. A graph $G$ of even order $n$ is $q$-extendable \cite{M.D. Plummer}, where $q$ is an integer with $0\leq q<n/2$, if $G$ has a perfect matching and every set of $q$ independent edges is contained in a perfect matching of $G$. Favaron \cite{O. Favaron} showed that for $q$ even, every connected non-bipartite $q$-extendable graph is $q$-factor-critical. Two properties of $p$-factor-critical graphs are presented as follows.

Let $c_{0}(G)$ denotes the number of odd components of a graph $G$.

\begin{Lemma}[\cite{Favarvon,Yu}]\label{2.1} A graph $G$ is $p$-factor-critical if and only if $c_{0}(G-X)\leq|X|-p$ for every $X\subseteq V(G)$ with $|X|\geq p$.
\end{Lemma}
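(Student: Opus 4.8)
The plan is to derive Lemma~\ref{2.1} directly from Tutte's $1$-factor theorem, which asserts that a graph $H$ has a perfect matching if and only if $c_0(H-S)\le|S|$ for every $S\subseteq V(H)$. The bridge between the two statements is the elementary identity $(G-Y)-S=G-(Y\cup S)$ for disjoint $Y,S\subseteq V(G)$, which lets one translate the existence of perfect matchings in the vertex-deleted subgraphs $G-Y$ into a counting condition on $G$ itself. Note also that the parity requirement $p\equiv n\pmod{2}$ built into the definition of $p$-factor-criticality guarantees that every $G-Y$ with $|Y|=p$ has even order, so no parity obstruction ever arises.

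First I would handle the forward implication. Assume $G$ is $p$-factor-critical and fix $X\subseteq V(G)$ with $|X|\ge p$. Choose any $Y\subseteq X$ with $|Y|=p$ and put $S=X\setminus Y$, so $|S|=|X|-p$. By hypothesis $G-Y$ has a perfect matching, so the easy (necessity) direction of Tutte's theorem, applied to $H=G-Y$ with the set $S\subseteq V(G-Y)$, gives $c_0\bigl((G-Y)-S\bigr)\le|S|$. Since $(G-Y)-S=G-X$ and $|S|=|X|-p$, this is precisely $c_0(G-X)\le|X|-p$, as wanted.

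For the converse I would run the same translation in reverse. Assume $c_0(G-X)\le|X|-p$ for every $X\subseteq V(G)$ with $|X|\ge p$, and fix an arbitrary $Y\subseteq V(G)$ with $|Y|=p$; the goal is to show $G-Y$ has a perfect matching. By the hard (sufficiency) direction of Tutte's theorem applied to $H=G-Y$, it suffices to verify $c_0\bigl((G-Y)-S\bigr)\le|S|$ for every $S\subseteq V(G)\setminus Y$. Setting $X=Y\cup S$ (a disjoint union, so $|X|=p+|S|\ge p$), we have $(G-Y)-S=G-X$, whence $c_0\bigl((G-Y)-S\bigr)=c_0(G-X)\le|X|-p=|S|$, which is exactly the Tutte condition we needed.

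I do not expect a genuine obstacle here: all the mathematical content is Tutte's theorem, which is invoked as a known result, and the argument is just a repackaging of it. The only point that needs care is the bookkeeping that $Y$ and $S$ are disjoint in both directions — taking $Y\subseteq X$ and $S=X\setminus Y$ in the first, and $X=Y\cup S$ in the second — so that cardinalities add and $(G-Y)-S=G-(Y\cup S)$ holds; once that is observed the proof is immediate.
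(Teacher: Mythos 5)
Your proof is correct. The paper does not prove Lemma~\ref{2.1} at all --- it is quoted from the cited references of Favaron and Yu --- and your derivation via Tutte's theorem, using the identity $(G-Y)-S=G-(Y\cup S)$ to pass between the condition on $G-X$ and the Tutte condition for $G-Y$, is exactly the standard argument those sources give; both directions, including the disjointness bookkeeping and the $S=\emptyset$ case forcing $c_0(G-Y)=0$, check out.
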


For a connected graph $G$, a {\em vertex-cut} of $G$ is a set of vertices whose removal disconnects $G$. The ({\em vertex-}){\em connectivity} of $G$, denoted by $\kappa(G)$, is the greatest integer $k$ such that $k$ is less than the order of $G$ and $G$ contains no vertex-cuts of $G$ with size less than $k$.  For  $F \subseteq E(G)$, $G-F$ denotes the resulting graph by removing the edges in $F$; $F$ is said to be an {\em edge-cut} of $G$ if $G-F$ is disconnected. The {\em edge-connectivity} $\lambda(G)$ of $G$ is the minimum cardinality over all edge-cuts of $G$. 

\begin{Lemma}[\cite{Favarvon}]\label{4} If a graph $G$ is $p$-factor-critical with $1\leq p<|V(G)|$, then $\kappa(G)\geq p$ and $\lambda(G)\geq p+1$.
\end{Lemma}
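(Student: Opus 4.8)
For $\lambda$ and $\kappa$ I plan to argue by contradiction through Lemma~\ref{2.1}, using only its weak consequence that if $G$ is $p$-factor-critical then for every vertex set $X$ with $|X|=p$ the graph $G-X$ has no odd component (indeed $c_0(G-X)\le|X|-p=0$). So in each part I will exhibit a set $X$ of \emph{exactly} $p$ vertices for which $G-X$ has a component of odd order. As a preliminary tool I will first record that $\delta(G)\ge p+1$: if some $v$ had $\deg(v)\le p$, then (as $n>p$) one can choose a $p$-set $X\supseteq N(v)$ with $v\notin X$, whereupon $v$ is isolated in $G-X$ and $\{v\}$ is an odd component, a contradiction. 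Note also that $n-p$ is even and, since $p<n$, that $n-p\ge 2$.

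For $\kappa(G)\ge p$ I may assume $G$ is not complete (else $\kappa(G)=n-1\ge p$). Let $S$ be a vertex cut and suppose for contradiction that $|S|\le p-1$. Write the components of $G-S$ as $C_1,\dots,C_m$ ($m\ge 2$) with $|C_i|=n_i$, so $\sum n_i=n-|S|\ge n-p+1$. The idea is to retain inside each $C_i$ a connected subgraph on $\ell_i$ vertices (any $0\le\ell_i\le n_i$ is realizable since $C_i$ is connected) and delete everything else together with $S$; the retained pieces, lying in distinct components of $G-S$, are exactly the components of $G-X$, where $X=S\cup(\text{deleted vertices})$. Choosing $\sum\ell_i=n-p$ gives $|X|=|S|+\big((n-|S|)-(n-p)\big)=p$, so the central arithmetical step is to select $\ell_i\in[0,n_i]$ with $\sum\ell_i=n-p$ and at least one $\ell_i$ odd; any such choice produces an odd component of $G-X$ and the desired contradiction. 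I expect this selection to always be possible: taking $\ell_1$ to be the largest odd integer not exceeding $\min(n_1,\,n-p)$ and distributing the remainder among $C_2,\dots,C_m$ works, the only inequalities needed being $m\ge 2$, $n-p\ge 2$, and $n-p\le(n-|S|)-1$ (i.e. $|S|\le p-1$).

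For $\lambda(G)\ge p+1$, suppose for contradiction that there is a minimum edge cut $F$ with $|F|=\ell\le p$, splitting $V(G)$ into two parts $V_1,V_2$ with all $\ell$ edges of $F$ between them. A degree count shows each side is large: if $|V_1|=x\le p$, then, since every vertex has degree $\ge p+1$ and at most $x-1$ neighbours inside $V_1$, one gets $\ell\ge x(p+2-x)\ge p+1$, contradicting $\ell\le p$; hence $|V_1|,|V_2|\ge p+1$. Let $U_1$ be the set of endpoints in $V_1$ of the edges of $F$, so $|U_1|\le\ell\le p$, and deleting $U_1$ already separates $V_2$ from the rest. I then build $X$ with $|X|=p$ as follows: delete $U_1$; if $|V_2|$ is odd, fill $X$ with further vertices of $V_1\setminus U_1$ (room exists as $|V_1|\ge p$), leaving $V_2$ isolated of odd total order; if $|V_2|$ is even and $|U_1|\le p-1$, additionally delete one arbitrary vertex of $V_2$, leaving the isolated region $V_2$-minus-a-vertex of odd total order. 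In either case the isolated region has odd order, hence contains an odd component, the required contradiction.

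The remaining obstacle is the single symmetric case $|U_1|=\ell=p$ with $|V_2|$ even, which forces $F$ to be a matching with $p$ distinct endpoints on the $V_1$-side and leaves no budget to delete a vertex of $V_2$. I will handle it by choosing an edge $zz'\in F$ with $z\in V_1$, $z'\in V_2$ and taking $X=(U_1\setminus\{z\})\cup\{z'\}$, which has size $p$: deleting $U_1\setminus\{z\}$ kills every crossing edge except $zz'$, and deleting $z'$ kills $zz'$, so $V_2\setminus\{z'\}$ is separated and of odd order $|V_2|-1$, again producing an odd component. The heart of the whole argument, and the part to get right, is exactly this parity bookkeeping: guaranteeing that the isolated side can always be forced to odd order within a budget of precisely $p$ deletions. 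This is why the feasibility claim in the vertex-connectivity case and the matching case in the edge-connectivity case are the crux.
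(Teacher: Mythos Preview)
The paper does not prove Lemma~\ref{4}; it is quoted from Favaron~\cite{Favarvon} as an external result, so there is no in-paper argument to compare against. That said, your proof is correct and self-contained.

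A couple of remarks. In the $\kappa$-part your tentative ``I expect this selection to always be possible'' can be upgraded to a certainty: with $\ell_1$ the largest odd integer at most $\min(n_1,n-p)$ (which is at least $1$ since $n_1\ge 1$ and $n-p\ge 2$), one checks in both cases $n_1\le n-p$ and $n_1>n-p$ that the residual $n-p-\ell_1$ lies in $[0,\sum_{i\ge 2}n_i]$, using only $m\ge 2$ and $|S|\le p-1$; the retained pieces are then connected induced subgraphs and hence are precisely the components of $G-X$, one of which has odd order $\ell_1$. In the $\lambda$-part your degree bound $x(p+2-x)\ge p+1$ for $1\le x\le p$ follows from concavity and the endpoint values $p+1$ and $2p$; the final ``matching'' case $|U_1|=\ell=p$, $|V_2|$ even, is handled cleanly by your swap $X=(U_1\setminus\{z\})\cup\{z'\}$, which kills every edge of $F$ and isolates $V_2\setminus\{z'\}$ of odd order $|V_2|-1\ge p\ge 1$. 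The only polishing needed is to replace the hedging language (``I expect'', ``works'') with the verifications above.
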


Theorem \ref{1.1} shows the factor-criticality and bicriticality of vertex-transitive graphs. A question arises naturally: what about $p$-factor-criticality of vertex-transitive graphs for $p\geq3$?

In this paper, we characterize the 3-factor-criticality of connected vertex-transitive odd graphs as follows.

\begin{thm}\label{100} A connected vertex-transitive odd graph of order at least 5 is 3-factor-critical if and only if it is not a cycle.
\end{thm}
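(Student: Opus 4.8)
\medskip\noindent\emph{Sketch of the intended argument.}

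For the ``only if'' direction: if $G$ is an odd cycle of order $n\ge 5$ then $\lambda(G)=2<4$, so by Lemma~\ref{4} (with $p=3$) $G$ is not $3$-factor-critical. So from now on assume $G$ is connected, vertex-transitive, of odd order $n\ge 5$, and not a cycle, and let us prove it is $3$-factor-critical. Being vertex-transitive, $G$ is $k$-regular for some $k$; since $G$ is connected and not a cycle, $k\ge 3$, and since $kn$ is even while $n$ is odd, $k$ is even, hence $k\ge 4$. We will use two classical facts about connected vertex-transitive graphs: $G$ is $\lambda$-optimal, i.e.\ $\lambda(G)=k$ (Mader); and $G$ satisfies the Mader--Watkins inequality $\kappa(G)\ge\frac{2}{3}(k+1)$, which here yields $\kappa(G)\ge 4$.

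By Lemma~\ref{2.1} it suffices to show $c_0(G-X)\le|X|-3$ for every $X\subseteq V(G)$ with $|X|\ge 3$. Suppose not and let $X$ be a counterexample with $|X|$ minimum; set $z:=|X|$. Since $c_0(G-X)\equiv n-z\pmod 2$ and $n$ is odd, $c_0(G-X)$ and $z-2$ have opposite parity, so the failure $c_0(G-X)\ge z-2$ in fact forces $c_0(G-X)\ge z-1$; on the other hand $G$ is factor-critical by Theorem~\ref{1.1}(a), so Lemma~\ref{2.1} with $p=1$ gives $c_0(G-X)\le z-1$. Hence $c_0(G-X)=z-1$. In particular $G-X$ is disconnected, so $X$ is a vertex-cut and $z\ge\kappa(G)\ge 4$; write $c:=z-1\ (\ge 3)$ and let $C_1,\dots,C_c$ be the odd components of $G-X$.

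Next comes an edge count across $X$. Each component $C$ of $G-X$ has $e(C,X)\ge\lambda(G)=k$, since its incident edges form an edge-cut of $G$ and all of them land in $X$; as $\sum_C e(C,X)=e(X,V(G)\setminus X)\le zk$, the graph $G-X$ has at most $z=c+1$ components, so it consists of $C_1,\dots,C_c$ together with at most one further (even) component, and all these estimates are almost tight --- in particular $2e(G[X])\le k$, and if an even component occurs then $X$ is independent and every component sends exactly $k$ edges into $X$. Then I would exploit the minimality of $X$: for each $w\in X$ the set $X\setminus\{w\}$ (of size $z-1\ge 3$) is not a counterexample, so $c_0\big(G-(X\setminus\{w\})\big)\le z-4$; restoring $w$ to $G-X$ can only reduce the odd-component count by merging components of $G-X$ through $w$, and since each $C_i$ is odd a short parity computation shows $w$ must be adjacent to at least three of $C_1,\dots,C_c$. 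Substituting ``every $w\in X$ meets $\ge 3$ of the $C_i$'' and ``every $C_i$ meets $X$ in $\ge k$ edges'' back into $\sum_i e(C_i,X)=zk-2e(G[X])$, and splitting according to the orders of the $C_i$ --- a singleton component $\{v\}$ forces $k=\deg v\le|X|=z$, which tightly constrains the parameters (and, in the all-singletons case, forces $n=2z-1$ and $e(G[X])=k/2$), while a component of order $\ge 2$ corresponds to a near-minimum edge-cut of $G$, where one invokes the structure theory of minimum and near-minimum cuts in vertex-transitive graphs (atoms are blocks of imprimitivity; the vertex-transitive graphs that fail to be super-edge-connected are classified) to describe $G[C_i]$ and how it attaches to $X$ --- I expect to reach a numerical contradiction, possibly after checking a short list of extremal configurations, none of which is an odd vertex-transitive non-cycle of order at least $5$.

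The hard part is precisely this last step. The elementary inequalities only deliver $c_0(G-X)\le|X|$, and narrowing the gap to $|X|-3$ has to use vertex-transitivity in an essential way. The all-singletons case is fairly self-contained, but the case of medium-sized components of $G-X$ seems to require the atom/fragment machinery for vertex-transitive graphs together with a delicate simultaneous accounting of vertices and of edges crossing $X$, and I expect essentially all of the real work to be concentrated there.
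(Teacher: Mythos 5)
Your setup is sound and coincides with the paper's opening moves: the ``only if'' direction via Lemma~\ref{4}, the parity argument forcing $c_0(G-X)=|X|-1$, the facts $k\ge 4$ even, $\lambda(G)=k$, and the edge count showing each component of $G-X$ sends at least $k$ edges into $X$, hence at most one even component. Your observation from minimality of $X$ (each $w\in X$ adjacent to at least three odd components) is correct but is not where the difficulty lies, and you say so yourself. The problem is that everything after that point --- which you describe as ``I expect to reach a numerical contradiction'' --- is precisely the content of the theorem, and it does not follow from the fragment/atom machinery you gesture at. In particular, even the case you call ``fairly self-contained'' (all components of $G-X$ singletons) is not: it reduces to showing that a connected vertex-transitive odd graph with $k\ge 4$ has independence number strictly less than $(|V(G)|-1)/2$, which is Lemma~\ref{3.1} of the paper and requires a nontrivial double count over shortest odd cycles using vertex-transitivity; it is not a consequence of connectivity estimates.

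The paper's actual route to the contradiction is a girth case split supported by several bespoke counting lemmas, none of which appear in your sketch. For girth $3$ it counts triangles through singletons of $G-X$ against edges of $G[X]$ (Lemma~\ref{3.2}) and combines this with $\kappa(G)\ge k$ for $k\in\{4,6\}$ and $\kappa(G)>\tfrac23 k$ otherwise. For girth at least $4$ it first proves a strengthened super restricted edge-connectivity theorem for odd vertex-transitive graphs (Theorem~\ref{2.8}, itself a new result requiring a $\lambda_2$-superatom analysis), uses it together with Lemma~\ref{3.1} to show $G-X$ has exactly one nontrivial component $H$ with $d(V(H))=2k$, then invokes $3$-restricted and cyclic edge-connectivity results (Lemmas~\ref{2.10} and~\ref{2.11}) to force $g=4$ and $k=4$, and finally disposes of that case by a delicate analysis of quadrangles through the edges of $\nabla(V(H))$ (Lemmas~\ref{3.3} and~\ref{3.4}). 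Your proposal identifies correctly where the work must happen but supplies none of these ingredients, so as it stands it is an accurate plan of attack for the easy reductions with the core of the proof missing.
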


To prove this, we apply the vertex-connectivity, edge-connectivity and several conditional edge-connectivities of vertex-transitive graphs. These results will be introduced in detail in Section 2. In Section 3, we will present some useful lemmas which play important roles in the proof of Theorem \ref{100}. In the last section, we prove Theorem \ref{100}.

\section{Conditional edge-connectivities of vertex-transitive graphs}
In this section, we will introduce some results on the connectivity, edge-connectivity, $s$-restricted edge-connectivity and cyclic edge-connectivity of vertex-transitive graphs.

Firstly, we present some notations. Let $X$ be a proper subset of $V(G)$ and set $\overline{X}$ = $V(G)\backslash X$. $\nabla(X)$ denotes the set of edges of $G$ with one end in $X$ and the other in $\overline{X}$. $N_{G}(X)=\{y: y\in\overline{X}$ such that there is some $x\in X$ with $xy\in E(G)\}$. $d_{G}(X)$ denotes the number of edges in $\nabla(X)$. $\nabla(\{v\})$, $N_{G}(\{v\})$ and $d_{G}(\{v\})$ are written as, in short, $\nabla(v)$, $N_{G}(v)$ and $d_{G}(v)$, respectively. The vertices in $N_{G}(v)$ are called {\em neighbors} of $v$. $G-X$ denotes the subgraph obtained by removing vertices in $X$ from $G$. $G[X]$ denotes the subgraph induced by $X$. If there is no confusion, then $N_{G}(X)$ and $d_{G}(X)$ are written as, in short, $N(X)$ and $d(X)$, respectively.

Watkins \cite{Watkins} studied the connection between connectivity and vertex-degree  for vertex-transitive graphs.

\begin{Lemma}[\cite{Watkins}]\label{2.2} Let $G$ be a connected $k$-regular vertex-transitive graph. Then $\kappa(G)>\frac{2}{3}k$.
\end{Lemma}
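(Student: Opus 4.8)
The plan is to study a minimum vertex-cut of $G$ via the method of \emph{fragments} and \emph{atoms}, and to play off two opposing inequalities whose combination forces the constant $\tfrac23$. Since $\kappa(G)\le k$ always holds, if $\kappa(G)=k$ then $\kappa(G)=k>\tfrac23 k$ and there is nothing to prove; so I assume $\kappa(G)\le k-1$ and write $\kappa:=\kappa(G)$. I would call a nonempty proper set $F\subsetneq V(G)$ a \emph{fragment} if $N(F)$ is a minimum vertex-cut and $V(G)\setminus(F\cup N(F))\neq\emptyset$, and call a fragment of least cardinality an \emph{atom}. Fix an atom $A$, put $a:=|A|$ and, after relabelling, $N(A)=S$ with $|S|=\kappa$; set $B:=V(G)\setminus(A\cup S)$.

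First I would record the elementary inequality. Since $N(A)=S$, every vertex of $A$ has all its neighbours in $A\cup S$; as at most $a-1$ of them lie in $A$, at least $k-(a-1)$ lie in $S$, and a vertex has at most $|S|=\kappa$ neighbours there, so
\begin{equation}\label{eq:easy}
a+\kappa\ge k+1.
\end{equation}
Because $S$ is a \emph{minimum} cut, minimality also forces each $s\in S$ to have a neighbour in $A$ and a neighbour in $B$ (otherwise $S\setminus\{s\}$ would still separate $A$ from $B$). In particular, under the standing assumption $\kappa\le k-1$ no atom can be a single vertex (a singleton atom $\{v\}$ would give $N(v)=S$ and hence $\kappa=k$), so $a\ge 2$.

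The crux is the reverse inequality bounding the atom from above, namely
\begin{equation}\label{eq:hard}
\kappa\ge 2a.
\end{equation}
To reach it I would first prove that distinct atoms are disjoint, using the submodularity of the outer neighbourhood, $|N(X\cap Y)|+|N(X\cup Y)|\le|N(X)|+|N(Y)|$, which one checks vertex by vertex. Applied to two atoms meeting in a common vertex, this inequality exhibits a fragment of cardinality strictly smaller than an atom, a contradiction; hence the atoms are pairwise disjoint. Vertex-transitivity then makes the images of $A$ under $\mathrm{Aut}(G)$ a block system partitioning $V(G)$, so the setwise stabiliser of $A$ acts transitively on $A$; consequently $G[A]$ is vertex-transitive and regular, every vertex of $A$ sends the same number $r$ of edges into $S$, and $d(A)=ar$. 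Counting the edges of $\nabla(A)$ against the vertices of $S$, each of which must also reach $B$ and so absorbs only a limited share of $\nabla(A)$ (Mader's atom estimate), yields $|N(A)|\ge 2|A|$, i.e.\ \eqref{eq:hard}. Substituting \eqref{eq:hard} into \eqref{eq:easy} gives $\kappa\ge 2a\ge 2(k-\kappa+1)$, whence $3\kappa\ge 2(k+1)$ and
\begin{equation}\label{eq:final}
\kappa(G)\ge\tfrac23(k+1)>\tfrac23 k.
\end{equation}

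I expect the main obstacle to be \eqref{eq:hard}: inequality \eqref{eq:easy} is immediate, but \eqref{eq:hard} is exactly where vertex-transitivity is indispensable, since it fails for general regular graphs. Two points are delicate. First, the disjointness of atoms requires applying submodularity in several cases according to whether $A\cup A'$ (or its complement) is too large to be a fragment, so that one genuinely obtains a \emph{smaller} fragment and a contradiction. Second, once the atoms are known to form a block system, the edge-count that produces $\kappa\ge 2a$ is the decisive technical step; the extremal configurations, in which \eqref{eq:easy} and \eqref{eq:hard} hold with equality simultaneously, show that the constant $\tfrac23$ cannot be improved.
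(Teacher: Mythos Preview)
The paper does not prove this lemma; it is simply quoted from Watkins as a known result, so there is no in-paper proof to compare against. Your outline is exactly the classical Watkins fragment--atom argument, and the overall scheme---combining $a+\kappa\ge k+1$ with $\kappa\ge 2a$ to obtain $3\kappa\ge 2(k+1)$---is correct and is the standard route.

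The one soft spot is your justification of the inequality $\kappa\ge 2a$. The edge-counting heuristic you describe (``each $s\in S$ must also reach $B$ and so absorbs only a limited share of $\nabla(A)$'') does not by itself yield $|N(A)|\ge 2|A|$: knowing only that every $s\in S$ has at most $k-1$ neighbours in $A$ gives $a(k-r)\le(k-1)\kappa$, which is too weak. What Watkins actually establishes, via the same submodular inequality you invoke, is stronger than mere disjointness of atoms: if $A$ is an atom and $T$ is \emph{any} minimum vertex-cut with $A\cap T\neq\emptyset$, then $A\subseteq T$. Applied with $T=S=N(A)$, this shows that every atom meeting $S$ lies entirely inside $S$, so $S$ is a disjoint union of atoms and $\kappa=|S|$ is a positive multiple of $a$. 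One then rules out $\kappa=a$: if $S$ were a single atom, its neighbourhood $N(S)$ would again be a minimum cut of size $a$, hence a single atom, yet $N(S)$ meets both $A$ and $B$, contradicting disjointness. Thus $\kappa\ge 2a$. Your proposal names the right tool but the mechanism you sketch is not the one that works; replacing the edge count by the ``an atom is contained in any minimum cut it meets'' lemma closes the gap.
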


\begin{Lemma}[\cite{Watkins}]\label{2.3} If $G$ is vertex-transitive with degree $k$ = 4 or 6, then $\kappa(G)$ = $k$.
\end{Lemma}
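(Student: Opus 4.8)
I would prove the two inequalities $\kappa(G)\le k$ and $\kappa(G)\ge k$ separately, the first being trivial. Since $G$ is connected and $k$-regular, $\kappa(G)\le\lambda(G)\le\delta(G)=k$, so it remains to establish $\kappa(G)\ge k$. The plan is to argue by contradiction: suppose $\kappa(G)=m<k$ and apply Lemma~\ref{2.2}, which gives $m>\tfrac23 k$. Combined with $m\le k-1$, this pins $m$ down to a single value in each case, namely $m=3$ when $k=4$ and $m=5$ when $k=6$; in both cases $m=k-1$. Thus the whole task reduces to the uniform statement: \emph{a connected $k$-regular vertex-transitive graph with $k\in\{4,6\}$ has no vertex-cut of size $k-1$.}

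Next I would set up the fragment framework. Fix a minimum cut $S$ with $|S|=m=k-1$, and let $A$ be a component of $G-S$ of smallest order $a$. If $|N(A)|<m$ then $N(A)$ would be a smaller cut (the remaining components are nonempty), so $N(A)=S$; hence every vertex of $S$ has a neighbor in $A$, and, $S$ being a minimal cut, every vertex of $S$ also meets at least one other component, so it sends at most $k-1$ edges into $A$. Counting the edges across the cut gives
\[
e(A,S)=ka-2e(G[A]),\qquad m\le e(A,S)\le (k-1)m,
\]
and, because $k$ is even, $e(A,S)$ is even. Moreover each vertex of $A$ has at most $m=k-1$ neighbors in $S$, hence at least $k-m=1$ neighbor inside $A$, so $G[A]$ is connected with minimum degree at least $1$ and $a\ge2$. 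Already $a=1$ is impossible, since a single vertex would need its $k$ neighbors inside a set $S$ of size $k-1$.

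To cut the problem down to finitely many configurations I would take $A$ to be an \emph{atom} (a fragment of least order) and invoke the standard theory of atoms of vertex-transitive graphs in the circle of ideas of \cite{Watkins}: distinct atoms are pairwise disjoint, they are permuted by $\mathrm{Aut}(G)$ and hence partition $V(G)$ into blocks of equal size $a\mid n$, and each induces a vertex-transitive, therefore $r$-regular, subgraph $G[A]$. Then $e(A,S)=a(k-r)$, where $k-r\le m$ forces $r\ge1$ and $a\ge r+1$. Substituting into the sandwich $m\le a(k-r)\le (k-1)m$ leaves only a short list of pairs $(r,a)$ for $k=4$ and a somewhat longer list for $k=6$ (running over $r=1,\dots,k-1$), which I would tabulate.

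The final and hardest step is to eliminate each surviving candidate using the \emph{global} vertex-transitivity rather than just the regularity of $G[A]$. Here I would compare the way $S$ attaches to the (at least two) equally structured fragments, using simplicity (a vertex of $A$ has at most $m$ distinct neighbors in $S$), the parity of $e(A,S)$, and the degree identity $k\,m=2e(G[S])+\sum_{\text{components }C}e(C,S)$. The cases where the pure counting is inconclusive are the tight ones — for $k=4$, $A$ a single edge whose two ends are joined to all of $S$, or $A\cong K_4$; and the analogous tight configurations for $k=6$ — and these I expect to rule out by showing that the forced local neighborhood pattern cannot close up into a vertex-transitive graph (equivalently, $G$ would have to be one of a few explicit small graphs, each of which in fact already has $\kappa=k$). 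I expect this borderline elimination to be the main obstacle, and the degree-$6$ analysis to be the more laborious half, since it presents several tight subcases in place of the one or two that occur for degree $4$.
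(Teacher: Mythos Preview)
The paper does not contain a proof of Lemma~\ref{2.3}: it is quoted directly from Watkins~\cite{Watkins} and used as a black box, so there is no ``paper's own proof'' against which to compare your proposal.

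For what it is worth, your outline is precisely the atomic-part strategy that Watkins himself uses in~\cite{Watkins}: assume $\kappa(G)=m<k$, invoke the $m>\tfrac{2}{3}k$ bound to force $m=k-1$, take a minimum fragment (atom) $A$, use the fact that in a vertex-transitive graph the atoms are pairwise disjoint and form an imprimitive block system so that $G[A]$ is vertex-transitive and hence $r$-regular, and then finish by the numerics $m\le a(k-r)\le(k-1)m$. Your framework and the counting are set up correctly. The one caveat is that your final paragraph is honestly labelled as an expectation rather than an argument: you list the tight configurations but do not actually eliminate them. In Watkins' treatment this step is not done by ad~hoc case analysis of small graphs; rather, one exploits further that \emph{every} atom attaches to its neighborhood in the same pattern (since $\mathrm{Aut}(G)$ acts transitively on atoms), and combines this with the constraint that $|S|=k-1$ while at least two disjoint atoms border $S$. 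If you intend to write this out in full, that symmetry between the two sides of the cut is the missing ingredient that closes the remaining cases cleanly, rather than an exhaustive inspection of candidate graphs.
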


It is well known that $\kappa(G)\leq\lambda(G)\leq\delta(G)$, where $\delta(G)$ is the minimum vertex-degree of $G$. A connected graph $G$ is said to be {\em maximally edge-connected} if $\lambda(G)=\delta(G)$. Mader \cite{Mader} proved the following result:

\begin{Lemma}[\cite{Mader}]\label{2.4} All connected vertex-transitive graphs are maximally edge-connected.
\end{Lemma}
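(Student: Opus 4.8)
The plan is to prove the statement in its classical form. Since a vertex-transitive graph is regular, say of degree $k$, we have $\delta(G)=k$ and hence $\lambda(G)\le\delta(G)=k$, so it suffices to establish the reverse inequality $\lambda(G)\ge k$. I would run the whole argument through the notion of a \emph{fragment}, meaning a proper nonempty vertex set $X$ with $d(X)=\lambda(G)$, and an \emph{atom}, a fragment of smallest possible cardinality; write $A$ for a fixed atom and $a=|A|$. The case $a=1$ is immediate: then $A=\{v\}$ and $\lambda(G)=d(A)=d(v)=k$. So the real work is to control atoms of size $a\ge 2$.

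The engine of the proof is the submodularity of the edge-boundary function, namely $d(X\cap Y)+d(X\cup Y)\le d(X)+d(Y)$ for all $X,Y\subseteq V(G)$, which follows by checking the contribution of each edge to both sides. First I would record that $\overline{A}$ is itself a fragment (since $d(\overline{A})=d(A)=\lambda(G)$), so minimality of $a$ forces $a\le n/2$, where $n=|V(G)|$. The key structural step is then to show that \emph{distinct atoms are disjoint}: given atoms $A,B$ with $A\cap B\ne\emptyset$, the bound $a\le n/2$ rules out $A\cup B=V(G)$ (else $|A\cap B|=2a-n\le 0$), so both $A\cap B$ and $A\cup B$ are proper and nonempty and therefore have boundary at least $\lambda(G)$; submodularity then forces $d(A\cap B)=\lambda(G)$, exhibiting $A\cap B$ as a fragment of size $\le a$, whence $A\cap B=A=B$ by minimality, a contradiction. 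I expect this disjointness argument — and in particular excluding the degenerate case $A\cup B=V(G)$ — to be the main obstacle, since it is where minimality, submodularity, and the $a\le n/2$ bound all have to be combined correctly.

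Once atoms are pairwise disjoint, vertex-transitivity turns them into a partition of $V(G)$: every vertex lies in some atom (translate $A$ by an automorphism), and automorphisms preserve $d$ and cardinality, hence send atoms to atoms. The same observation upgrades to transitivity on $A$ itself, for $u,w\in A$ an automorphism $\varphi$ with $\varphi(u)=w$ must satisfy $\varphi(A)=A$, because $\varphi(A)$ is an atom meeting $A$ in $w$ and atoms are disjoint. Thus $G[A]$ is vertex-transitive, so it is $r$-regular for some $r$, every vertex of $A$ has the same external degree $k-r$, and $\lambda(G)=d(A)=a(k-r)$. Since $G$ is connected and $A$ is proper, $\lambda(G)\ge 1$, which forces $k-r\ge 1$.

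It remains to deduce $\lambda(G)\ge k$ from $\lambda(G)=a(k-r)$ by a short case split on $a$. If $a\ge k+1$, then $\lambda(G)=a(k-r)\ge a\ge k+1>k$. If $2\le a\le k$, then using $r\le a-1$ (as $G[A]$ is simple on $a$ vertices) I would write $\lambda(G)=a(k-r)\ge a(k-a+1)=k+(a-1)(k-a)\ge k$, the last inequality holding because both factors $(a-1)$ and $(k-a)$ are nonnegative in this range. In either case $\lambda(G)\ge k$, and together with $\lambda(G)\le k$ this yields $\lambda(G)=k=\delta(G)$, so $G$ is maximally edge-connected. I would close by noting that no step used anything beyond regularity and vertex-transitivity, so the conclusion holds for every connected vertex-transitive graph.
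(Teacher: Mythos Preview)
Your argument is correct: the fragment/atom machinery, the disjointness of atoms via submodularity and the bound $a\le n/2$, the passage to regularity of $G[A]$ via vertex-transitivity, and the final inequality $a(k-a+1)=k+(a-1)(k-a)\ge k$ all go through as written. This is essentially Mader's original approach.

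Note, however, that the paper does not give its own proof of this lemma: it is stated with a citation to \cite{Mader} and used as a black box. So there is no paper proof to compare against; you have supplied a complete proof where the authors simply quote the literature.
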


A connected graph $G$ is said to be {\em super edge-connected}, in short, {\em super-$\lambda$}, if each of its minimum edge-cut is $\nabla(v)$ for some $v\in V(G)$. Tindell \cite{Tindell} characterized the super edge-connectivity of vertex-transitive graphs. An {\em imprimitive block} of $G$ is a proper non-empty subset $X$ of $V(G)$ such that for any automorphism $\varphi$ of $G$, either $\varphi(X)=X$ or $\varphi(X)\cap X=\emptyset$.  Tindell's result can be expressed as follows; See the two quotes \cite{Wang} and in \cite{DeVos}.

\begin{Lemma}[\cite{Tindell}]\label{2.5} A connected vertex-transitive graph $G$ with degree $k\geq3$ is super-$\lambda$ if and only if there is no imprimitive block of $G$ which is a clique of size $k$.
\end{Lemma}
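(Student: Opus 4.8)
The plan is to prove both implications through the standard machinery of edge-cut fragments together with the submodularity of the boundary function. Throughout, $G$ is connected, $k$-regular and vertex-transitive with $k\geq 3$, so by Lemma \ref{2.4} we have $\lambda(G)=k$. Call a nonempty proper subset $X\subsetneq V(G)$ a \emph{fragment} if $d(X)=k$; since $\nabla(X)=\nabla(\overline{X})$, a minimum edge-cut fails to have the form $\nabla(v)$ precisely when it equals $\nabla(X)$ for a fragment with $|X|\geq 2$ and $|\overline{X}|\geq 2$, which I will call a \emph{nontrivial} fragment. Thus $G$ is super-$\lambda$ if and only if it has no nontrivial fragment, and the whole statement reduces to: $G$ has a nontrivial fragment if and only if some imprimitive block of $G$ is a clique of size $k$. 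I will use freely that automorphisms preserve $d$, so every image $\varphi(X)$ of a fragment is a fragment of the same cardinality, together with the submodular inequality $d(A)+d(B)\geq d(A\cap B)+d(A\cup B)$.

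For the easy direction, suppose $B$ is an imprimitive block that is a clique of size $k$. Each vertex of $B$ has $k-1$ neighbours inside $B$ and hence exactly one outside, so $d(B)=k=\lambda(G)$ and $\nabla(B)$ is a minimum edge-cut. It remains to see that $\nabla(B)$ is nontrivial, i.e. that $|\overline{B}|\geq 2$. If instead $|\overline{B}|=1$ then $n=k+1$ and $G=K_{k+1}$; but then a transposition exchanging the vertex of $\overline{B}$ with a vertex of $B$ carries $B$ to a $k$-set meeting $B$ in $k-1$ vertices, contradicting that $B$ is a block. Hence $|\overline{B}|\geq 2$, so $\nabla(B)$ is the cut of a nontrivial fragment and $G$ is not super-$\lambda$.

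For the converse I would assume $G$ has a nontrivial fragment and choose one, $X$, of minimum cardinality $a$; passing to $\overline{X}$ if needed I may take $a\leq n/2$, so $2\leq a\leq n-2$. First, $G[X]$ is connected, since a disconnection would split $X$ into parts each of boundary at least $k$, forcing $d(X)\geq 2k$. Next, $a\neq 2$: for $X=\{x,y\}$ one has $xy\in E(G)$ and $d(X)=2(k-1)$, which equals $k$ only if $k=2$; as $k\geq 3$ this is impossible, so $a\geq 3$. The heart of the argument is that distinct nontrivial fragments of size $a$ are disjoint. If $X\neq X'$ are such fragments with $X\cap X'\neq\emptyset$, then $X\cup X'\neq V(G)$ (else $|X\cap X'|=2a-n\leq 0$), so submodularity forces $d(X\cap X')=d(X\cup X')=k$; if $2\leq |X\cap X'|\leq a-1$ this is a nontrivial fragment smaller than $a$, a contradiction, so $X\cap X'=\{v\}$ is a single vertex. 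Applying submodularity to the fragments $X'$ and $\overline{X}$, whose intersection is $X'\setminus\{v\}$ and whose union is proper, shows $X'\setminus\{v\}$ is a fragment of size $a-1\geq 2$, again nontrivial and smaller than $a$ --- the final contradiction. Hence distinct size-$a$ nontrivial fragments are disjoint.

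To finish, note that every image $\varphi(X)$ is a nontrivial fragment of size $a$, so by the disjointness step $\varphi(X)=X$ or $\varphi(X)\cap X=\emptyset$ for all $\varphi\in\mathrm{Aut}(G)$; that is, $X$ is an imprimitive block. Since $G$ is vertex-transitive and $X$ is a block, the setwise stabilizer of $X$ acts transitively on $X$, so $G[X]$ is vertex-transitive and hence regular of some degree $r$. Then $a(k-r)=d(X)=k$, so $a\mid k$ and thus $a\leq k$, while $r\leq a-1$ gives $k\,(a-1)/a\leq a-1$, i.e. $k\leq a$; therefore $a=k$, $r=k-1$, and $G[X]=K_k$. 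Thus $X$ is an imprimitive block that is a clique of size $k$, completing the proof. The main obstacle is exactly the disjointness step: submodularity on its own only eliminates overlaps of size between $2$ and $a-1$, and the single-vertex overlap must be removed by the second application of submodularity to $X'$ and $\overline{X}$; it is here, together with the exclusion of $a=2$, that the hypothesis $k\geq 3$ is essential.
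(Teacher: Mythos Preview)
The paper does not give a proof of Lemma~\ref{2.5}; it is quoted from Tindell~\cite{Tindell} (see also \cite{Wang,DeVos}) as a known result, so there is no ``paper's own proof'' to compare against. Your argument is the standard atom/fragment approach and is correct: you reduce to studying a minimum nontrivial fragment $X$, use submodularity twice (once on $X,X'$ and once on $X',\overline{X}$) to rule out overlapping minimum fragments, conclude that $X$ is a block, and then use the regularity of $G[X]$ (Lemma~\ref{2.9}) together with $a(k-r)=k$ to force $a=k$ and $G[X]=K_k$. All the case checks (that $a\geq 3$ via $k\geq 3$, that intersections and unions remain proper, that the resulting smaller fragments are still nontrivial) go through as written.

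One cosmetic simplification in the easy direction: since the images of a block under a transitive group partition $V(G)$, a block of size $k$ forces $k\mid n$, hence $n\geq 2k$ and $|\overline{B}|\geq k\geq 3$; this replaces the $K_{k+1}$ case analysis.
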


For a connected graph $G$, an edge-cut $F$ of $G$ is said to be an {\em $s$-restricted edge-cut} if every component of $G-F$ has at least $s$ vertices, where $s$ is a positive integer. The {\em $s$-restricted edge-connectivity} of $G$ is the minimum cardinality over all $s$-restricted edge-cuts of $G$, denoted by $\lambda_{s}(G)$. A 2-restricted edge-cut and 2-restricted edge-connectivity are usually called a restricted edge-cut and restricted edge-connectivity, respectively.

Esfahanian and Hakimi \cite{Esfahanian} showed that if a connected graph $G$ of order $n\geq4$ is not a star $K_{1,n-1}$ then $\lambda_{2}(G)$ is well-defined and $\lambda_{2}(G)\leq\xi(G)$, where $\xi(G)$ is the minimum edge-degree of $G$, that is, the minimum number of edges adjacent to a certain edge in $G$. A connected graph $G$ is called to be {\em maximally restricted edge-connected}, if $\lambda_{2}(G)=\xi(G)$. Furthermore, a maximally restricted edge-connected graph $G$ is called to be {\em super restricted edge-connected}, in short, super-$\lambda_{2}$, if every minimum restricted edge-cut of $G$ isolates an edge, that is, every minimum restricted edge-cut of $G$ is a set of edges adjacent to a certain edge with minimum edge-degree in $G$. Xu \cite{Xu} studied restricted the edge-connectivity of connected vertex-transitive graphs.

\begin{Lemma}[\cite{Xu}]\label{2.6} Let $G$ be a connected vertex-transitive graph of order at least 4. Then $G$ is maximally restricted edge-connected if its order is odd or it has no triangle.
\end{Lemma}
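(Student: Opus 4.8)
The plan is to prove the equivalent statement $\lambda_2(G)\ge\xi(G)$. Since $G$ is connected of order at least $4$ and, being vertex-transitive, is regular of some degree $k\ge2$, it is not a star, so by the Esfahanian--Hakimi result quoted above $\lambda_2(G)$ is well defined and $\lambda_2(G)\le\xi(G)=2k-2$. The cycle case $k=2$ is immediate ($\lambda_2=2=\xi$), so I would assume $k\ge3$ and argue by contradiction: suppose there is a $2$-restricted edge-cut $F=\nabla(X)$ with $d(X)=|F|\le 2k-3$, where $2\le|X|=m\le|\overline{X}|$ and each of $G[X]$, $G[\overline{X}]$ has only components of order at least $2$.

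First I would make a counting reduction. Writing $e=|E(G[X])|$, we have $d(X)=km-2e\ge m(k-m+1)$ because $e\le\binom{m}{2}$; combined with $d(X)\le 2k-3$ this forces $m^2-(k+1)m+(2k-3)\ge0$. As this quadratic is negative at $m=2$, the value $m=2$ is impossible (indeed $m=2$ always gives $d(X)\ge 2k-2$), and more generally $m$ must exceed the larger root, so in particular $m\ge3$.

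The technical heart is to upgrade $X$ to an imprimitive block. I would take $X$ to be a $\lambda_2$-atom, that is, a fragment of minimum cardinality among those realizing $\lambda_2$. Using submodularity of the edge-boundary function, $d(S)+d(T)\ge d(S\cap T)+d(S\cup T)$, applied to two atoms with nonempty intersection, together with a careful treatment of the degenerate configurations in which $S\cap T$ or $V(G)\setminus(S\cup T)$ is too small to be a legitimate side of a restricted cut, one shows that distinct atoms are pairwise disjoint. Since every automorphism of $G$ carries atoms to atoms, the atoms then form a block system, $X$ becomes an imprimitive block of common size $r=m$ covering $V(G)$, and the setwise stabilizer of $X$ acts transitively on $X$. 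Hence $G[X]$ is vertex-transitive, so it is $k'$-regular for some $k'<k$, and $d(X)=r(k-k')$. I expect this disjointness/block step, with its degenerate cases, to be the main obstacle; everything after it is clean.

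The endgame then splits on $t:=k-k'\ge1$. Our bound reads $rt\le 2k-3$, while $(k-t)$-regularity of $G[X]$ forces $r\ge k-t+1$; substituting gives $(k-t+1)t\le 2k-3$. This already fails for $t=2$, while for $t\ge3$ it forces $k'=0$, whence $G[X]$ is edgeless and $r\le1$, contradicting $r\ge2$. So $t=1$: $G[X]$ is $(k-1)$-regular on $r$ vertices with $k\le r\le 2k-3$. Now the two hypotheses finish. If $G$ is triangle-free, any edge $uv$ of $G[X]$ has $N_{G[X]}(u)$ and $N_{G[X]}(v)$ disjoint, each of size $k-1$ and contained in $X$, so $r\ge 2(k-1)=2k-2$, contradicting $r\le 2k-3$. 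If instead $n$ is odd, then $k$ is even, hence $k-1$ is odd; a $(k-1)$-regular graph has an even number of vertices, forcing $r$ even, whereas $r\mid n$ with $n$ odd forces $r$ odd, again a contradiction. Either hypothesis yields the required contradiction, so $\lambda_2(G)=\xi(G)$.
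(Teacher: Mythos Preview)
The paper does not prove this lemma; it is quoted from Xu without argument. Your outline follows the standard $\lambda_2$-atom method, which is almost certainly Xu's own route and is exactly the machinery the present paper deploys in its proof of Theorem~\ref{2.8}. Two remarks on the details.

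First, the disjointness of atoms---which you rightly flag as the crux---goes through cleanly once you record the following preliminary fact: since $\lambda(G)=k$ by Lemma~\ref{2.4} and you are assuming $\lambda_2\le 2k-3<2k$, every $S\subseteq V(G)$ with $2\le|S|\le n-2$ satisfies $d(S)\ge\lambda_2$ (if $G[S]$ or $G[\overline S]$ has an isolated vertex $v$ then $\nabla(v)$ and $\nabla(S\setminus\{v\})$ are disjoint subsets of $\nabla(S)$, giving $d(S)\ge k+\lambda(G)=2k$; otherwise $\nabla(S)$ is itself a restricted cut). Now let $X,Y$ be distinct atoms of common size $r\ge3$ with $X\cap Y\ne\emptyset$; necessarily $r\le n/2$. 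If $|X\cap Y|\ge2$ and $|\overline{X\cup Y}|\ge2$, submodularity together with the preliminary fact forces $d(X\cap Y)=\lambda_2$, and then the preliminary fact again shows $\nabla(X\cap Y)$ is a genuine restricted cut, contradicting minimality of $r$. In the remaining cases apply the companion inequality $d(X)+d(Y)\ge d(X\setminus Y)+d(Y\setminus X)$: one checks $|X\setminus Y|=|Y\setminus X|\ge 2$ (this uses $r\ge3$ when $|X\cap Y|=1$, and $r\le n/2$ when $|\overline{X\cup Y}|\le1$), and the same reasoning produces a fragment of size $<r$. So the ``degenerate configurations'' are routine once both submodular forms are in play.

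Second, there is a slip in the $t\ge3$ endgame: $k'=0$ means $G[X]$ is $0$-regular, i.e.\ edgeless, but this does not imply $r\le1$. The correct contradiction is that an edgeless $G[X]$ consists entirely of singleton components, so $\nabla(X)$ is not a restricted edge-cut at all; equivalently, the isolated-vertex bound above gives $d(X)\ge 2k>\lambda_2$. With this correction your argument is complete and matches the standard proof.
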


Wang \cite{Wang} studied the super restricted edge-connectivity of connected vertex-transitive graphs. The {\em girth} of a graph $G$ with a cycle is the length of a shortest cycle of $G$.

\begin{Lemma}[\cite{Wang}]\label{2.7} If $G$ is a connected vertex-transitive graph with degree $k>2$ and girth $g>4$, then it is super-$\lambda_{2}$.
\end{Lemma}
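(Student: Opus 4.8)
The plan is to separate the two conditions hidden in ``super-$\lambda_2$'': that $G$ is maximally restricted edge-connected, and that every minimum restricted edge-cut isolates an edge. Since $g>4$, the graph $G$ has no triangle, so Lemma \ref{2.6} already gives $\lambda_2(G)=\xi(G)=2k-2$ (recall a vertex-transitive graph is $k$-regular, so $\xi(G)=2k-2$). Hence only the second condition remains. Let $F$ be an arbitrary minimum restricted edge-cut and write $F=\nabla(X)$, choosing $X$ to be a smallest component of $G-F$; then $G[X]$ is connected, $2\le m:=|X|\le|\overline X|$, and $d(X)=2k-2$. Writing $e(X):=|E(G[X])|$ and using $d(X)=km-2e(X)$, I get $e(X)=\tfrac{km-2(k-1)}{2}$, and since a connected graph on $m$ vertices has $e(X)=m-1+c$ with $c$ its number of independent cycles, the cyclomatic number of $G[X]$ is $c=\tfrac{(k-2)(m-2)}{2}$. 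The goal is to prove $m=2$, for then $e(X)=1$ and $F$ isolates the edge $G[X]$. So suppose $m\ge 3$. Two global constraints will drive the argument. First, $c\ge 1$, so $G[X]$ contains a cycle, necessarily of length $\ge g\ge 5$. Second, since $d_{\overline X}(v):=k-d_{G[X]}(v)\ge 0$ and $\sum_{v\in X}d_{\overline X}(v)=d(X)=2k-2$, at most $2k-2$ vertices of $X$ send an edge across the cut; I call these the \emph{boundary} vertices and the others \emph{interior} vertices (all $k$ of whose neighbours lie in $X$).

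First I would dispose of the case in which $X$ has no interior vertex, equivalently $m\le 2k-2$. Here I bound $e(X)$ from above by the maximum number of edges of a graph on $m$ vertices of girth $\ge 5$, using the Moore-type estimate $e(X)\le\tfrac14 m\bigl(1+\sqrt{4m-3}\bigr)$ (two vertices share at most one neighbour, else a $4$-cycle appears, so $\sum_v d(v)^2\le m(m-1)+2e(X)$, and Cauchy--Schwarz gives the bound). Comparing with $e(X)=\tfrac{km-2(k-1)}{2}$ reduces the contradiction to the inequality $(2k-1)m-4(k-1)>m\sqrt{4m-3}$, which I would verify for $3\le m\le 2k-2$ and $k\ge 3$, checking the few smallest values of $m$ against the exact extremal numbers $2,3,5$ for $m=3,4,5$ (where the Moore estimate is slack). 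This is a contradiction, so if $m\ge 3$ then $X$ must contain an interior vertex.

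The remaining, and hardest, case is $m$ large with an interior vertex $v$ present. Girth $\ge 5$ makes $G[X]$ locally tree-like: $N(v)\subseteq X$ is independent and no two of its members have a common neighbour other than $v$ (that would close a $4$-cycle), so the second neighbourhood $N_2(v)$ consists of $k(k-1)$ distinct vertices, each joined to exactly one vertex of $N(v)$. Every vertex of $N_2(v)\cap\overline X$ therefore accounts for a distinct crossing edge, whence $|N_2(v)\cap\overline X|\le d(X)=2k-2$ and $|N_2(v)\cap X|\ge(k-1)(k-2)$, giving $m\ge k^2-2k+3$. Iterating this expansion outward shows that a connected girth-$5$ subgraph whose interior vertices keep full degree $k$ continually spawns new crossing edges, which is in tension with the global budget of only $2k-2$ of them.

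The main obstacle is turning this tension into a clean contradiction. Rather than push a delicate isoperimetric estimate by hand, I expect the efficient route to be the standard fragment/atom technique. I would take a minimum-order $\lambda_2$-fragment $A$ and use submodularity of $d(\cdot)$, in the form $d(A\cap B)+d(A\cup B)\le d(A)+d(B)$, to show that two distinct minimum-order fragments are disjoint. Then vertex-transitivity enters: the images of $A$ under the automorphisms of $G$ cover $V(G)$ and are pairwise disjoint, so $A$ is an imprimitive block with $d(A)=2k-2$ and $|A|\ge 3$. Finally I would contradict this by density: the identity $c=\tfrac{(k-2)(m-2)}{2}$ forces $G[A]$ to carry more cycles than a girth-$\ge 5$ block of a $k$-regular graph can sustain once $|A|\ge 3$. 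This closes the large case and, with the two earlier reductions, shows that every minimum restricted edge-cut isolates an edge, so $G$ is super-$\lambda_2$.
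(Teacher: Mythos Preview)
The paper does not supply its own proof of Lemma~\ref{2.7}; the result is quoted from Wang~\cite{Wang}. The closest thing in the paper is the proof of Theorem~\ref{2.8}, which is the odd-order strengthening and uses exactly the $\lambda_2$-superatom machinery you reach for in your last paragraph. So the relevant comparison is between your proposal and that argument.

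Your proposal is really two half-proofs spliced together, and the splice is where the gap lies. The Moore-bound computation for small $m$ is essentially correct (note, though, that ``no interior vertex'' only \emph{implies} $m\le 2k-2$, it is not equivalent to it; and the inequality $(2k-1)m-4(k-1)>m\sqrt{4m-3}$ genuinely fails at $k=3$, $m=4$, so the hand checks against the exact extremal numbers are necessary, not cosmetic). But this entire case analysis is superfluous once you commit to the atom method, which handles all $m\ge 3$ at once.

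The real problem is the last paragraph. Submodularity does \emph{not} force two distinct $\lambda_2$-superatoms to be disjoint: if $|X\cap Y|\in\{1,2\}$ the intersection is too small to be a nontrivial fragment, so no contradiction with minimality arises. What submodularity actually yields is $|X\cap Y|\le 2$ and $|X\cap\overline Y|\le 2$, hence $|X|\le 4$ whenever two superatoms meet. The correct structure (and the one the paper uses for Theorem~\ref{2.8}) is therefore a dichotomy: either all superatoms are pairwise disjoint, in which case they are imprimitive blocks, $G[X]$ is vertex-transitive by Lemma~\ref{2.9} and hence $t$-regular with $|X|(k-t)=2k-2$, and one finishes by combining the crude bound $|X|\ge k-1$ with the Moore bound $|X|\ge t^2+1$ for girth $\ge 5$ to force $k\le 2$; or two superatoms overlap, whence $|X|\in\{3,4\}$, and one checks directly that $e(X)=\tfrac{k|X|-(2k-2)}{2}$ exceeds the edge maximum of a girth-$\ge 5$ graph on $3$ or $4$ vertices. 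Your ``density'' sentence gestures at the first branch without naming the Moore bound or doing the arithmetic, and it omits the second branch entirely. As written, then, the proof is incomplete; once you restructure around this dichotomy the argument goes through, and the separate small-$m$/large-$m$ counting can be discarded.
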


We make an improvement on the previous result for vertex-transitive odd graphs.

\begin{thm}\label{2.8} If $G$ is a connected vertex-transitive odd graph with degree $k>2$ and girth $g>3$, then it is super-$\lambda_{2}$.
\end{thm}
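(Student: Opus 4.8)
The plan is to argue by contradiction. Since $G$ is simple and $k$-regular, each edge is adjacent to exactly $2(k-1)$ other edges, so $\xi(G)=2k-2$; as $|V(G)|$ is odd, Lemma~\ref{2.6} yields $\lambda_2(G)=\xi(G)=2k-2$. Note that $k$ is even (an odd-order graph cannot be regular of odd degree), so $k\ge4$, and by Lemma~\ref{2.4}, $\lambda(G)=k$. Suppose $G$ is not super-$\lambda_2$. Then $G$ has a minimum restricted edge-cut $F$ such that no component of $G-F$ is a single edge; i.e.\ every component of $G-F$ has at least three vertices. Let $X$ be the vertex set of a smallest component of $G-F$. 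Then $G[X]$ is connected, $\nabla(X)$ is again a restricted edge-cut contained in $F$ whose removal leaves only components of order $\ge3$, so $d(X)=|\nabla(X)|=\lambda_2(G)=2k-2$. Choosing $X$ of minimum cardinality among all such sets (call it a minimum restricted fragment of order $\ge3$) and observing that otherwise the components of $G[X]$, and of $G[\overline X]$, would be smaller such fragments --- or would force the out-degrees of several disjoint pieces to sum to more than $2k-2$ --- one gets that $G[\overline X]$ is connected as well. Thus $G[X]$ and $G[\overline X]$ are connected, $3\le|X|\le|\overline X|$, and $d(X)=d(\overline X)=2k-2$.

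The next step is a counting bound. By $k$-regularity, $2e(G[X])=k|X|-d(X)=k|X|-2k+2$, and $G[X]$ is triangle-free because $g>3$, so Mantel's theorem gives $e(G[X])\le|X|^2/4$. Combining these, $|X|^2-2k|X|+4k-4\ge0$; the roots of the left side are $2$ and $2k-2$, so $|X|\ge3$ forces $|X|\ge2k-2$, with equality only if $G[X]\cong K_{k-1,k-1}$ and every vertex of $X$ has exactly one neighbour in $\overline X$. The same applies to $\overline X$, giving $|\overline X|\ge2k-2$ with the analogous rigidity. Since $n=|X|+|\overline X|$ is odd while $2k-2$ is even, $|X|$ and $|\overline X|$ cannot both equal $2k-2$; hence $n\ge4k-3$.

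The core of the argument exploits vertex-transitivity. For each vertex $u$, an automorphism carrying a vertex of $X$ onto $u$ produces a minimum restricted fragment of order $\ge3$ containing $u$, so $V(G)$ is covered by automorphic copies of $X$, all of the same order. If no two distinct copies intersect, $X$ is an imprimitive block, whence $|X|$ divides the odd number $n$ and so is odd; this already kills the possibility $|X|=2k-2$. Otherwise choose an automorphism $\varphi$ with $Y:=\varphi(X)$ satisfying $X\cap Y\neq\emptyset$ and $Y\neq X$; then $X$ and $Y$ cross properly, and the plan is to run the submodular identity
\[
d(X)+d(Y)=d(X\cap Y)+d(X\cup Y)+2\,e\!\left(X\setminus Y,\,Y\setminus X\right)
\]
against the two inequalities $d(S)\ge\lambda(G)=k$ (any proper nonempty $S$) and $d(S)\ge\lambda_2(G)=2k-2$ (whenever $\nabla(S)$ is a restricted edge-cut). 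In the rigid case $G[X]\cong K_{k-1,k-1}$, inspecting a common vertex $v$ of $X$ and $Y$ together with its $k$ neighbours shows $|X\cap Y|\ge k-1\ge3$; the identity then yields either a restricted fragment of order $<|X|$, contradicting minimality, or --- in the degenerate subcase where $G[\overline{X\cup Y}]$ has an isolated vertex $w$ --- the estimate $d\!\left((X\cup Y)\cup\{w\}\right)=d(X\cup Y)-k<k$, contradicting $\lambda(G)=k$. The remaining case, where $|X|\ge2k-1$ on both sides, is where I expect the real difficulty: here the rigidity is lost, but at most $d(X)=2k-2<|X|$ vertices of $X$ meet $\nabla(X)$, so $X$ has an ``interior'' vertex, its out-boundary is a vertex-cut, and Lemmas~\ref{2.2} and~\ref{2.3} constrain its size ($>\tfrac23k$, and $=k$ for $k\in\{4,6\}$); picking $\varphi$ to push an interior vertex of the larger side into the other side gives a crossing whose intersection is small but nonempty, and one repeats the submodularity/edge-connectivity dichotomy, using in addition the block alternative $|X|\mid n$ with $|X|$ odd. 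Carrying out this last case cleanly, together with the bookkeeping for the degenerate crossings (intersections of size $1$, or induced pieces with isolated vertices), is the bulk of the work.
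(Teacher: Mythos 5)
Your setup is sound and your Mantel-based bound $|X|\ge 2k-2$ (with rigidity $G[X]\cong K_{k-1,k-1}$ at equality) is correct and in fact stronger than the bound $|X|\ge k-1$ that the paper extracts from $e(G[X])\le\binom{|X|}{2}$. But the proposal is not a proof: the case you yourself label ``the real difficulty'' and ``the bulk of the work'' --- two crossing minimum fragments with $|X|\ge 2k-1$ --- is left entirely unexecuted, and that is precisely where the content of the theorem lies. There is a second, smaller gap in the disjoint case: knowing that the imprimitive block $X$ has odd cardinality only excludes $|X|=2k-2$; to kill $|X|\ge 2k-1$ you need Lemma~\ref{2.9}, which makes $G[X]$ vertex-transitive and hence $t$-regular with $t<k$, so that $d(X)=|X|(k-t)\ge|X|>2k-2=\lambda_{2}(G)$, a contradiction. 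You never invoke this.

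The irony is that the hard case you are bracing for collapses once the submodularity step is run \emph{unconditionally}, which is what the paper does. Let $X$ and $Y$ be two distinct nontrivial minimum fragments of minimum cardinality with $X\cap Y\ne\emptyset$; they have equal size $\le|V(G)|/2$, so $|\overline{X\cup Y}|\ge|X\cap Y|$ and $|X\cap\overline{Y}|=|Y\cap\overline{X}|$. Using only the observation that $d(S)\ge\lambda_{2}(G)$ whenever $|S|\ge2$ and $|\overline{S}|\ge2$ (because a disconnected side forces $d(S)\ge2\lambda(G)=2k>2k-2$), the inequality $d(X)+d(Y)\ge d(X\cap Y)+d(X\cup Y)$ shows that $|X\cap Y|\ge3$ would make $X\cap Y$ a smaller nontrivial minimum fragment; applying the same argument to $X$ and $\overline{Y}$ shows $|X\cap\overline{Y}|\le2$ as well. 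Hence $|X|=|X\cap Y|+|X\cap\overline{Y}|\le4$, which flatly contradicts your own bound $|X|\ge 2k-2\ge6$ (the paper, having only $|X|\ge k-1$, instead rules out $|X|\in\{3,4\}$ directly using triangle-freeness). No rigidity analysis, no interior vertices, no appeal to Lemmas~\ref{2.2} or~\ref{2.3} is needed. What is missing from your writeup, then, is not a technical lemma but the realization that the crossing argument bounds \emph{both} $|X\cap Y|$ and $|X\cap\overline{Y}|$, turning the ``remaining case'' into an immediate contradiction.
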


Before we prove Theorem \ref{2.8}, we need to introduce some definitions and  some useful lemmas. A proper subset $X$ of $V(G)$ is called a {\em $\lambda_{2}$-fragment} of $G$ if $\nabla(X)$ is a minimum restricted edge-cut of $G$. A $\lambda_{2}$-fragment $X$ of $G$ is {\em trivial} if $|X|=2$. A nontrivial $\lambda_{2}$-fragment of $G$ with minimum cardinality is called a {\em $\lambda_{2}$-superatom} of $G$.

\begin{Lemma}[\cite{Godsil}]\label{2.9} Let $G$ be a vertex-transitive graph and $H$ be the subgraph of $G$ induced by an imprimitive block of $G$. Then $H$ is vertex-transitive.
\end{Lemma}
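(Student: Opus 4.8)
The plan is to prove this purely from the definition of an imprimitive block, using only that $\mathrm{Aut}(G)$ acts transitively on $V(G)$; no information about the edge structure of $G$ is needed beyond the trivial observation that a bijection of $V(G)$ preserving adjacency, once it fixes a subset setwise, preserves adjacency inside that subset.

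Write $A=\mathrm{Aut}(G)$ and let $B$ be the imprimitive block inducing $H=G[B]$. First I would record that the family $\mathcal{B}=\{\varphi(B):\varphi\in A\}$ is a partition of $V(G)$ into blocks of common size $|B|$: since $A$ is transitive on $V(G)$ and $B\neq\emptyset$, we have $\bigcup\mathcal{B}=V(G)$; for $\varphi,\psi\in A$ the automorphism $\psi^{-1}\varphi$ satisfies the block condition, so applying $\psi$ gives $\varphi(B)=\psi(B)$ or $\varphi(B)\cap\psi(B)=\emptyset$; and each $\varphi(B)$ has $|B|$ elements, being the image of $B$ under a bijection. In particular, for any $x\in B$ and any $\varphi\in A$, the set $\varphi(B)$ is the member of $\mathcal{B}$ containing $\varphi(x)$.

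The key step is then to show that the setwise stabiliser $A_{B}=\{\varphi\in A:\varphi(B)=B\}$ is already transitive on $B$. Given $x,y\in B$, vertex-transitivity of $G$ provides $\varphi\in A$ with $\varphi(x)=y$; then $\varphi(B)$ and $B$ both belong to $\mathcal{B}$ and both contain $y$, so by the equal-or-disjoint property $\varphi(B)=B$, that is, $\varphi\in A_{B}$. Finally, restriction $\varphi\mapsto\varphi|_{B}$ maps $A_{B}$ into $\mathrm{Aut}(H)$ — an element of $A_{B}$ permutes $B$ and preserves the edge relation of $G$, hence the edge relation of the induced subgraph $H=G[B]$ — and $\varphi|_{B}$ carries $x$ to $y$. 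Therefore $H$ is vertex-transitive.

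I do not anticipate a genuine obstacle: this is the classical fact that a block of imprimitivity inherits a transitive action from the setwise stabiliser inside the ambient transitive group. The one point deserving a little care is the assertion that $\mathcal{B}$ covers all of $V(G)$, so that ``the block through $\varphi(x)$'' is meaningful and equals $B$ — this is exactly where transitivity of $A$ on $V(G)$, and not merely the defining property of a block, enters.
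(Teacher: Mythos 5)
Your proof is correct. Note that the paper does not prove this lemma at all --- it is quoted from Godsil and Royle's \emph{Algebraic Graph Theory} --- so there is no in-paper argument to compare against; what you have written is the standard textbook proof (the translates of the block form a partition by transitivity plus the equal-or-disjoint property, the setwise stabiliser of the block therefore acts transitively on it, and its restriction lands in $\mathrm{Aut}(G[B])$ because an automorphism of $G$ preserving $B$ setwise preserves induced adjacency), and it works verbatim with the paper's definition of imprimitive block.
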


\begin{proof}[Proof of Theorem \ref{2.8}]
Since $G$ is regular by the vertex-transitivity of $G$ and $G$ is odd, $k$ is even. Hence $k\geq4$. By Lemmas \ref{2.4} and \ref{2.6}, $\lambda(G)=k$ and $\lambda_{2}(G)=2k-2$. We firstly claim that $d(S)\geq\lambda_{2}(G)$ for any subset $S\subseteq V(G)$ with $|S|\geq2$ and $|\overline{S}|\geq2$. In fact, if $G[S]$ or $G[\overline{S}]$ is disconnected, then $d(S)\geq2\lambda(G)=2k>2k-2=\lambda_{2}(G)$; if both $G[S]$ and $G[\overline{S}]$ are connected, then $\nabla(S)$ is a restricted edge-cut of $G$ and hence $d(S)\geq\lambda_{2}(G)$.

Suppose $G$ is not super-$\lambda_{2}$. Then $G$ has one $\lambda_{2}$-superatom and by the vertex-transitivity of $G$, it has at least two distinct $\lambda_{2}$-superatoms.

\textbf{Claim $\ast$.} $G$ has at least two distinct $\lambda_{2}$-superatoms $X$ and $Y$ such that $|X\cap Y|\geq1$.

Let $X$ be a $\lambda_{2}$-superatom of $G$. We have
\begin{align*}
|X|(|X|-1)&\geq\sum_{v\in X}d_{G[X]}(v)\\
&=k|X|-d(X)\\
&=k|X|-(2k-2)\\
&=|X|(|X|-1)-(|X|-k+1)(|X|-2).
\end{align*}
It follows that $|X|\geq k-1$ since $|X|>2$.

Suppose that every two distinct $\lambda_{2}$-superatoms of $G$ are disjoint. Then each $\lambda_{2}$-superatom is an imprimitive block of $G$. Thus $G[X]$ is vertex-transitive by Lemma \ref{2.9} and hence $G[X]$ is regular. Let $t$ be the regularity of $G[X]$. We have
$$2(k-1)=d(X)=|X|(k-t)\geq(k-1)(k-t),$$
i.e., $2\geq k-t\geq 1$, or $k-2\leq t\leq k-1$.

Since $X$ is an imprimitive block of $G$, $|X|$ is a divisor of $|V(G)|$. Noting that $|V(G)|$ is odd, $|X|$ is odd. It follows that $t$ is even. Then $t=k-2$. Noting that $2(k-1)=d(X)=|X|(k-t)=2|X|$, $|X|=k-1$. Hence $G[X]$ is a complete graph, contradicting $g>3$. Claim $\ast$ is proved.

Let $X$ and $Y$ be two distinct $\lambda_{2}$-superatoms of $G$ with $|X\cap Y|\geq1$. Such $X$ and $Y$ exist by Claim $\ast$. We will show that $|X\cap Y|\leq2$.

On the contrary, suppose that $|X\cap Y|\geq3$. Noting that $|X|\leq|V(G)|/2$ and $|Y|\leq|V(G)|/2$ by the definition of $\lambda_{2}$-superatom,
$$|\overline{X\cup Y}|=|V(G)|-|X|-|Y|+|X\cap Y|\geq|X\cap Y|\geq3.$$
We have that $d(X\cap Y)\geq\lambda_{2}(G)$ and $d(X\cup Y)\geq\lambda_{2}(G)$ by the claim in the first paragraph. Then, by the well-known submodular inequality (see, for example, p. 36-38 in \cite{Biggs}),
$$2\lambda_{2}(G)=d(X)+d(Y)\geq d(X\cap Y)+d(X\cup Y)\geq2\lambda_{2}(G).$$
It follows that $d(X\cap Y)=d(X\cup Y)=\lambda_{2}(G)$. Note that $\lambda_{2}(G)<2\lambda(G)$. We have that $\nabla(X\cap Y)$is a minimum restricted edge-cut with $3\leq|X\cap Y|<|X|$. This contradicts to the minimality of $\lambda_{2}$-superatom.

Next we will show that $|X\cap\overline{Y}|\leq2$. Suppose to the contrary that $|X\cap\overline{Y}|\geq3$. Then $|Y\cap\overline{X}|=|X\cap\overline{Y}|\geq3$. By the claim in the first paragraph, $d(X\cap\overline{Y})\geq\lambda_{2}(G)$ and $d(X\cup\overline{Y})\geq\lambda_{2}(G)$. Then a contradiction can be obtained by a similar argument as above.

So $3\leq|X|\leq4$. If $|X|=3$, then $G[X]$ is a path of length 2 since $G$ has no triangle and $G[X]$ is connected. Thus, $\lambda_{2}(G)=d(X)=3k-4>2k-2=\lambda_{2}(G)$, a contradiction. If $|X|=4$, then $|E(G[X])|=[4k-(2k-2)]/2=k+1\geq5$. It follows that $G[X]$ contains a triangle, a contradiction.
\end{proof}

Ou and Zhang \cite{Ou} studied the 3-restricted edge-connectivity of vertex-transitive graphs and proved the following results.

\begin{Lemma}[\cite{Ou}]\label{2.10} If $G$ is a connected $k$-regular vertex-transitive graph of order at least 6 and girth $g\geq4$, then either $\lambda_{3}(G)=3k-4$ or $\lambda_{3}(G)$ is a divisor of $|V(G)|$ such that $2k-2\leq\lambda_{3}(G)\leq3k-5$ unless $k=3$ and $g=4$.
\end{Lemma}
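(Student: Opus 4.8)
The plan is to follow the fragment--atom method already used for Theorem \ref{2.8}, now adapted to $3$-restricted edge-cuts. Call a proper subset $X\subseteq V(G)$ a \emph{$\lambda_{3}$-fragment} if $\nabla(X)$ is a minimum $3$-restricted edge-cut (so $d(X)=\lambda_{3}(G)$ and every component of both $G[X]$ and $G[\overline{X}]$ has at least $3$ vertices), and call a $\lambda_{3}$-fragment of minimum cardinality a \emph{$\lambda_{3}$-atom}. Since $g\geq 4$ forbids triangles, the densest connected $3$-set is a path $P_{3}$, for which $d(X)=3k-4$; isolating such a $P_{3}$ (using $|V(G)|\geq 6$ to keep the complementary side large) shows $\lambda_{3}(G)\leq 3k-4$. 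If equality holds we are in the first alternative, so from now on I assume $\lambda_{3}(G)\leq 3k-5$, which is already the upper bound of the stated range. For the lower bound, $g\geq 4$ makes $G$ triangle-free, so Lemma \ref{2.6} gives $\lambda_{2}(G)=\xi(G)=2k-2$; since every $3$-restricted edge-cut is in particular a restricted edge-cut, $\lambda_{3}(G)\geq\lambda_{2}(G)=2k-2$. Thus $2k-2\leq\lambda_{3}(G)\leq 3k-5$.

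The heart of the argument is to show that every $\lambda_{3}$-atom $X$ is an imprimitive block. Because an automorphism preserves $d(\cdot)$ and the $3$-restricted property, every image $\varphi(X)$ of an atom is again a $\lambda_{3}$-atom; hence if I prove that \emph{distinct $\lambda_{3}$-atoms are disjoint}, then $\varphi(X)=X$ or $\varphi(X)\cap X=\emptyset$ for every automorphism $\varphi$, i.e.\ $X$ is a block. Suppose $X$ and $Y$ are distinct atoms with $X\cap Y\neq\emptyset$. As in the proof of Theorem \ref{2.8}, I would feed the sets $X\cap Y$, $X\cup Y$, $X\setminus Y$, $Y\setminus X$ into the submodular inequality $d(X)+d(Y)\geq d(X\cap Y)+d(X\cup Y)$ and its crossing analogue. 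Whenever a resulting set is a legitimate $3$-restricted fragment (each side has at least $3$ vertices and all components of size $\geq 3$), its degree is $\geq\lambda_{3}(G)$, forcing the equalities that exhibit a fragment strictly smaller than the atom and contradicting minimality; the only escape is that the offending intersection or co-intersection has at most $2$ vertices. Carrying this bookkeeping through for both pairs bounds $|X\cap Y|$ and $|X\cap\overline{Y}|$, hence bounds $|X|$, after which $g\geq 4$ rules out the few surviving small configurations. I expect this intersection analysis --- keeping the component-size side conditions valid at every step --- to be the main obstacle, exactly the place where the threshold ``$\geq 3$'' rather than ``$\geq 2$'' makes the case distinctions heavier than in Theorem \ref{2.8}.

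Once disjointness is established, $X$ is an imprimitive block, so by Lemma \ref{2.9} the subgraph $G[X]$ is vertex-transitive and therefore $t$-regular for some $t$. Then $d(X)=|X|(k-t)$, so $\lambda_{3}(G)=|X|(k-t)$, and $|X|$ divides $|V(G)|$ since $X$ is a block. It remains to force $k-t=1$. Because $G[X]$ is $t$-regular and triangle-free, a vertex together with its first and second neighbourhoods gives $|X|\geq 2t$; combined with $|X|\geq 3$ and $|X|(k-t)\leq 3k-5$, a short case check on $t$ shows that $k-t\geq 2$ always forces $|X|(k-t)>3k-5$, a contradiction. (For example $t=2$ makes $G[X]$ a union of cycles of length $\geq g\geq 4$, so $|X|\geq 4$ and $d(X)\geq 4(k-2)>3k-5$ once $k\geq 4$.) Hence $k-t=1$, giving $\lambda_{3}(G)=|X|$, a divisor of $|V(G)|$; and $|X|\geq 2t=2k-2$ recovers the lower bound automatically, so $2k-2\leq\lambda_{3}(G)\leq 3k-5$.

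Finally I would isolate the exceptional case $k=3$, $g=4$. There the range collapses to $\lambda_{3}(G)=4$, and a $4$-cycle has $d(C_{4})=3\cdot 4-2\cdot 4=4=2k-2$; such a $C_{4}$ can serve as a $\lambda_{3}$-atom without being an imprimitive block, so the disjointness/block step of the previous paragraph may fail and $4$ need not divide $|V(G)|$. This is precisely the configuration that must be set aside rather than resolved by the general argument, which is why the statement excludes $k=3$, $g=4$.
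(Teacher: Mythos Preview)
This lemma is quoted in the paper as a result of Ou and Zhang \cite{Ou} and is \emph{not} proved in the paper at all; it is simply invoked in Claim~3 of Section~4. Consequently there is no ``paper's own proof'' to compare your proposal against.

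That said, a brief assessment of your sketch on its own terms: the overall strategy (bound $\lambda_{3}(G)\le 3k-4$ via a $P_{3}$, then in the strict case show a $\lambda_{3}$-atom is an imprimitive block via submodularity, read off regularity $t$ from Lemma~\ref{2.9}, and force $k-t=1$) is exactly the standard atom method and is almost certainly what the Ou--Zhang paper does. Your endgame is sound: once $G[X]$ is connected and $t$-regular, $t\ge 2$ is forced, and the inequality $2t(k-t)>3k-5$ for $t\ge 2$, $k-t\ge 2$, $k\ge 4$ (together with the girth bound $|X|\ge g$ when $t=2$) does eliminate $k-t\ge 2$. The one place you yourself flag as incomplete --- the intersection analysis showing distinct $\lambda_{3}$-atoms are disjoint --- is genuinely the crux, and it is heavier than the $\lambda_{2}$ case because after intersecting you must verify not merely $|X\cap Y|\ge 3$ but that \emph{every component} of $G[X\cap Y]$ and of $G[\overline{X\cap Y}]$ has at least three vertices before you may invoke $d(X\cap Y)\ge\lambda_{3}(G)$. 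In practice one falls back on $\lambda_{2}$ and $\lambda$ bounds when a side has a small component, and this is where several sub-cases accumulate; your write-up would need to spell these out to be a proof rather than a plan. Your treatment of the exception $k=3$, $g=4$ is also only heuristic: the real point is that the case analysis above uses $k\ge 4$ (e.g.\ $4(k-2)>3k-5$ fails at $k=3$), and for $k=3$, $g=4$ the forced value $\lambda_{3}=4$ need not arise from a block-sized atom.
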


For a connected graph $G$, an edge-cut $F$ of $G$ is called a {\em cyclic edge-cut} if at least two components of $G-F$ contain cycles. The {\em cyclic edge-connectivity} of $G$ with a cyclic edge-cut is defined as the minimum cardinality over all cyclic edge-cuts of $G$, denoted by $\lambda_{c}(G)$. Let $\zeta(G)$= min\{$d(X)|X\subseteq V(G)$ and $X$ induces a shortest cycle in $G$\}. Wang and Zhang \cite{B. Wang} showed that $\lambda_{c}(G)\leq\zeta(G)$ for any graph $G$ with a cyclic edge-cut. If $\lambda_{c}(G)=\zeta(G)$, then $G$ is called {\em cyclically optimal}. Wang and Zhang \cite{B. Wang} found a sufficient condition for vertex-transitive graphs to be cyclically optimal.

\begin{Lemma}[\cite{B. Wang}]\label{2.11} Let $G$ be a connected vertex-transitive graph with degree $k\geq4$ and girth $g\geq5$. Then $G$ is cyclically optimal.
\end{Lemma}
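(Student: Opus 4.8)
The plan is to prove $\lambda_c(G)\ge\zeta(G)$, since the reverse inequality is already known, and to show the optimal cut is realized by a shortest cycle. First I would record that if $X$ induces a shortest cycle then $G[X]$ is exactly a $g$-cycle (any chord would create a shorter cycle, contradicting the girth), so $d(X)=gk-2g$ and hence $\zeta(G)=g(k-2)$. I then argue by contradiction, assuming $G$ is not cyclically optimal, i.e. $\lambda_c(G)<g(k-2)$. Call a proper subset $X$ a \emph{cyclic fragment} if $\nabla(X)$ is a minimum cyclic edge-cut with $G[X]$ connected and both $G[X]$ and $G[\overline{X}]$ containing cycles, and call a cyclic fragment of least cardinality a \emph{cyclic atom}; standard reductions give that a cyclic atom exists, is connected, and has $|X|\ge g$. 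Since automorphisms preserve cycles and edge-cuts, every automorphic image of a cyclic atom is again a cyclic atom.

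The heart of the argument is to show that any two distinct cyclic atoms $X,Y$ are disjoint. I would apply the submodular inequalities $d(X)+d(Y)\ge d(X\cap Y)+d(X\cup Y)$ and $d(X)+d(Y)\ge d(X\setminus Y)+d(Y\setminus X)$. Because $\overline{X}$ already contains a cycle, $\nabla(X\cap Y)$ is a cyclic edge-cut precisely when $G[X\cap Y]$ contains a cycle, and similarly $\nabla(X\cup Y)$, $\nabla(X\setminus Y)$, $\nabla(Y\setminus X)$ are cyclic exactly when $G[\overline{X\cup Y}]$, $G[X\setminus Y]$, $G[Y\setminus X]$ respectively contain cycles. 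Consequently, if both $G[X\cap Y]$ and $G[\overline{X\cup Y}]$ contain cycles the first inequality forces $d(X\cap Y)=\lambda_c(G)$ with $|X\cap Y|<|X|$, contradicting minimality; the second inequality gives the same contradiction when $G[X\setminus Y]$ and $G[Y\setminus X]$ both contain cycles. Up to the symmetries $X\leftrightarrow Y$ and complementation (which interchange the four regions), the only surviving case is that $G[X\cap Y]$ and $G[X\setminus Y]$ are both forests. This case is the main obstacle, and here I expect to use the girth condition $g\ge 5$ decisively: the cycle forced inside $G[X]$ must thread repeatedly between the two forests $X\cap Y$ and $X\setminus Y$, so combining the forest edge-bounds with $d(X)<g(k-2)$ and the girth restriction on the length of such a threading cycle should produce a contradiction. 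Once disjointness is established, each atom is an imprimitive block of $G$, so by Lemma \ref{2.9} the subgraph $G[X]$ is vertex-transitive, hence $t$-regular for some $2\le t\le k-1$, and $d(X)=|X|(k-t)=\lambda_c(G)$.

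The proof then closes by a local count inside the atom. Since $G[X]$ is an induced subgraph its girth is at least $g\ge 5$; taking a shortest cycle $C$ of $G[X]$ of length $\ell\ge g$, the girth condition guarantees that the $t-2$ off-cycle neighbours of each vertex of $C$ are pairwise distinct and lie off $C$ (a chord, or a vertex adjacent to two cycle-vertices, would create a cycle of length at most $4$). Hence $|X|\ge \ell(t-1)\ge g(t-1)$, which also holds trivially when $t=2$. Therefore
$$\lambda_c(G)=|X|(k-t)\ge g(t-1)(k-t)\ge g(k-2)=\zeta(G),$$
where the last inequality uses that $(t-1)(k-t)$ is concave in $t$ and equals $k-2$ at both endpoints $t=2$ and $t=k-1$. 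This contradicts $\lambda_c(G)<g(k-2)$, completing the proof.

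I expect the disjointness step, and specifically the ``two forests'' case, to be the only genuinely delicate point; the remaining ingredients are bookkeeping with submodularity, the application of Lemma \ref{2.9}, and the Moore-type counting inequality. The role of the hypotheses is transparent in this scheme: $k\ge 4$ keeps $t$ in the range where the concavity estimate is valid and non-degenerate, while $g\ge 5$ is exactly what both rules out the forest configuration and makes the off-cycle neighbours distinct.
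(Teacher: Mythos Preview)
This lemma is quoted from \cite{B. Wang} and is \emph{not} proved in the present paper; it is used as a black box in the proof of Claim~3. So there is no ``paper's own proof'' to compare against here. Your outline does follow the strategy of the original Wang--Zhang argument (cyclic atoms, submodularity to force disjointness, then Lemma~\ref{2.9} to get regularity of $G[X]$, then a Moore-type count around a shortest cycle of $G[X]$), and the endgame you wrote is correct: for a shortest cycle $C$ of $G[X]$ of length $\ell\ge g\ge 5$, any off-cycle vertex adjacent to two vertices of $C$ would create a cycle in $G[X]$ shorter than $\ell$, so $|X|\ge \ell(t-1)\ge g(t-1)$, and the concavity estimate $(t-1)(k-t)\ge k-2$ on $[2,k-1]$ finishes it.

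The genuine gap is exactly where you flag it. In the disjointness step you reduce, after the two easy submodular cases, to the situation where two of the four regions $X\cap Y$, $X\setminus Y$, $Y\setminus X$, $\overline{X\cup Y}$ induce forests, and then you write only that the girth condition ``should produce a contradiction.'' Two remarks. First, your symmetry reduction to a single case is not quite right: swapping $X\leftrightarrow Y$ exchanges $X\setminus Y$ and $Y\setminus X$, but ``complementation'' is not a symmetry of the problem since $\overline{X}$ and $\overline{Y}$ need not be atoms; after the genuine symmetry you are still left with two essentially different configurations (both forests inside $X$, or both forests inside $\overline{Y}$). Second, and more importantly, this forest case is the real work in \cite{B. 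Wang}: one has to combine the bound $d(X)<g(k-2)$, the forest inequality $|E(G[A])|\le |A|-1$ on each acyclic piece, and the girth bound on the bipartite ``threading'' cycle to squeeze out a numerical contradiction, and you have not supplied that computation. Until that step is written out, the argument is incomplete; everything after disjointness is fine.
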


\section{Some useful lemmas}

A subset $X$ of $V(E)$ is called an independent set of a graph $G$ if $E(G[X])=\emptyset$. The independent number of $G$ is the maximum cardinality of  independent sets of $G$, denoted by $\alpha(G)$.

\begin{Lemma}\label{3.1} Let $G$ be a connected vertex-transitive odd graph with degree $k\geq4$. Then $\alpha(G)<(|V(G)|-1)/2$.
\end{Lemma}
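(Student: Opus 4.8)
The plan is to argue by contradiction: suppose $G$ is a connected vertex-transitive odd graph with degree $k \geq 4$ and independence number $\alpha(G) \geq (|V(G)|-1)/2$. Since $|V(G)|$ is odd, write $|V(G)| = 2m+1$; then the hypothesis says $\alpha(G) \geq m$, i.e. $G$ has an independent set $S$ with $|S| = m$, and $\overline{S} = V(G) \setminus S$ has exactly $m+1$ vertices. I would first exploit that $S$ is independent: every edge incident to a vertex of $S$ goes into $\overline{S}$, so $d(S) = \sum_{v \in S} d_G(v) = km$, counting edges from $S$ to $\overline{S}$. On the other hand, each vertex of $\overline{S}$ receives at most $k$ such edges, so $km = d(S) \leq k(m+1)$, which is no contradiction yet, so I need to be more careful about how the edges are distributed.

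The key refinement is to look at the structure of $\overline{S}$. Let $W$ be the set of vertices in $\overline{S}$ that have at least one neighbor in $S$; since $G$ is connected and $S \neq \emptyset$, $W \neq \emptyset$, and in fact $N(S) = W$. I would consider the subset $\overline{S}$ itself and apply an edge-counting bound together with the connectivity/edge-connectivity results of Section 2. Concretely, if $G[\overline{S}]$ has $e$ edges, then $2e + km = k(m+1)$, giving $2e = k$, so $G[\overline{S}]$ has exactly $k/2$ edges among its $m+1$ vertices; in particular $\overline{S}$ is "almost independent." Since $k \geq 4$, $\overline{S}$ contains at least one edge, but very few. Now I can push this: pick a vertex $u \in \overline{S}$ of maximum degree within $G[\overline{S}]$; by vertex-transitivity and a counting/averaging argument one controls how many vertices of $\overline{S}$ can be incident to these $k/2$ edges, forcing most of $\overline{S}$ to be isolated in $G[\overline{S}]$ and hence to send all $k$ of their edges into $S$.

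The decisive step, which I expect to be the main obstacle, is to derive a genuine contradiction from this near-bipartite-like structure, and here I would invoke the edge-connectivity machinery: $\nabla(S)$ is an edge-cut of size $km$, and the trick is to find a smaller or forbidden edge-cut. If some vertex $w \in \overline{S}$ has no neighbor in $S$, then $S$ is a vertex-cut separating $w$ from the rest, and one gets a contradiction with Lemma~\ref{2.2} (which forces $\kappa(G) > 2k/3$) when $|S| = m$ is small relative to $k$; the remaining case is that $m$ is large, where instead I would use that $G - S$ must then be connected with very few edges, apply Lemma~\ref{2.4} (maximal edge-connectivity) or the super-$\lambda$ characterization, and extract either a clique of size $k$ as an imprimitive block — contradicting vertex-transitivity via a divisibility/parity argument on $|V(G)|$ as in the proof of Theorem~\ref{2.8} — or a small restricted edge-cut contradicting $\lambda_2(G) = 2k-2$. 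The cleanest route is probably: the $k/2$ edges in $G[\overline{S}]$ together with the constraint that $G$ is $\delta(G) = k$-regular and maximally edge-connected force $m+1 - k/2 \cdot (\text{something})$ to be negative unless $m$ is tiny, and the tiny cases ($m = k-1$ or thereabouts) are then checked directly against Lemmas~\ref{2.2}, \ref{2.3}, showing $G$ would have to be a small exceptional graph whose independence number can be computed to be strictly less than $(|V(G)|-1)/2$. I would organize the final write-up around bounding $|S|$ from above using $d(S) = km \leq d(\overline S)$-type inequalities combined with $\kappa(G) > 2k/3$, reducing to finitely many cases.
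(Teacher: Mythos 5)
Your opening computation is correct and is a reasonable first move: with $|V(G)|=2m+1$ and an independent set $S$ of size $m$, regularity gives $2|E(G[\overline S])| = k(m+1)-km = k$, so $G[\overline S]$ spans exactly $k/2$ edges and all but at most $k$ of its $m+1$ vertices are isolated in $G[\overline S]$. But from that point on the proposal is a list of candidate strategies rather than an argument, and the one case you do try to close is both vacuous and wrong: a vertex $w\in\overline S$ with no neighbor in $S$ would have degree $k$ inside $G[\overline S]$, impossible since the total degree there is only $k$; and even if such a $w$ existed, $S$ being a vertex-cut of size $m=(|V(G)|-1)/2$ contradicts nothing, because $m$ is in general far larger than the bound $\kappa(G)>\tfrac{2}{3}k$ from Lemma~\ref{2.2}. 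The remaining suggestions (extracting a clique imprimitive block, a small restricted edge-cut, or an inequality of the form ``$m+1-k/2\cdot(\text{something})<0$'') are not connected to the structure you derived by any actual deduction; indeed the near-bipartite structure you reach (at most $k$ vertices whose removal leaves a bipartite graph with parts $S$ and the isolated vertices) is consistent with factor-criticality and with all the connectivity bounds of Section 2, so no contradiction follows from them alone. The decisive idea is genuinely missing.

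For comparison, the paper's proof takes a different route entirely: it double-counts incidences between vertices and shortest \emph{odd} cycles. If $Y$ is independent with $|Y|=(|V(G)|-1)/2$, every odd cycle of minimum odd length $g_0$ has at most $(g_0-1)/2$ vertices in $Y$ and at least $(g_0+1)/2$ in $\overline Y$; since by vertex-transitivity each vertex lies on the same number $m$ of such cycles and $|\overline Y|=|Y|+1$, subtracting the two incidence inequalities forces $m\ge n_{g_0}$, hence $m=n_{g_0}$, hence every shortest odd cycle is Hamiltonian --- impossible because $k\ge 4$ gives such a cycle a chord and thus a shorter odd cycle. If you want to salvage your approach you would need a step of comparable strength (for instance, an argument that the $\le k$ non-isolated vertices of $G[\overline S]$ cannot carry all odd cycles of a vertex-transitive graph); as written, the proof has a genuine gap at its central step.
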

\begin{proof} On the contrary, suppose that there is an independent set $Y$ of $G$ such that $|Y|=(|V(G)|-1)/2$. Since $G$ is regular by the vertex-transitivity of $G$ and $G$ is odd, $G$ is non-bipartite. It follows that $G$ contains a odd cycle. Let $g_{0}$ be the length of a minimum odd cycle in $G$. Since $G$ is vertex-transitive, each vertex is contained in a constant number of the cycles of length $g_{0}$ in $G$, say this constant number is $m$. Let $n_{g_{0}}$ be the total number of odd cycles of length $g_{0}$ in $G$.

Since $Y$ is an independent set, each odd cycle of length $g_{0}$ contains at most $(g_{0}-1)/2$ vertices in $Y$ and at least $(g_{0}+1)/2$ vertices in $\overline{Y}$. Therefore, vertices in $Y$ are covered by all the cycles of length $g_{0}$ at most $\frac{1}{2}(g_{0}-1)n_{g_{0}}$ times and vertices in $\overline{Y}$ are covered by all the cycles of length $g_{0}$ at least $\frac{1}{2}(g_{0}+1)n_{g_{0}}$ times. On the other hand, we know that vertices in $Y$ and $\overline{Y}$ are exactly covered by all the cycles of length $g_{0}$  $m|Y|$ times and $m|\overline{Y}|$ times, respectively. Hence
\begin{align}
m|Y|\leq\frac{1}{2}(g_{0}-1)n_{g_{0}}
\end{align}
and
\begin{align}
m|\overline{Y}|\geq\frac{1}{2}(g_{0}+1)n_{g_{0}}.
\end{align}
Note that $|\overline{Y}|=|V(G)|-|Y|=|Y|+1$. We can obtain by (2)-(1) that $m\geq n_{g_{0}}$. Then $m=n_{g_{0}}$ since $m\leq n_{g_{0}}$. This means that each minimum odd cycle $C$ must contain all vertices in $G$ and hence $C$ is a hamiltonian cycle. Thus, every odd cycle in $G$ is a hamiltonian cycle. This is impossible. Because each hamiltonian cycle in $G$ has chords since $k\geq4$, we can find a smaller odd cycle in $G$, a contradiction.
\end{proof}

\begin{Lemma}\label{3.2} Let $G$ be a vertex-transitive graph with a triangle. Then, for each subset $X\subseteq V(G)$, the number of singletons in $G-X$ is not more than the number of edges in $G[X]$.
\end{Lemma}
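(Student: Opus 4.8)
The plan is to produce an explicit injection from the set $I$ of singleton components of $G-X$ into the edge set $E(G[X])$, by verifying Hall's condition for a suitable auxiliary bipartite graph.

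I would start from two elementary facts. First, since $G$ is vertex-transitive and contains a triangle, every vertex of $G$ lies on the same number $t\ge 1$ of triangles; equivalently, $|E(G[N_G(v)])|=t$ for every $v$. Second, if $v\in I$ then $v$ is isolated in $G-X$, so $N_G(v)\subseteq X$, and hence all $t$ edges of $G$ lying inside $N_G(v)$ actually belong to $E(G[X])$. For the other side of the count, for any edge $e=ab$ the number of common neighbours of $a$ and $b$ equals the number of triangles of $G$ on $e$, and these triangles are among the $t$ triangles through the vertex $a$; so $|N_G(a)\cap N_G(b)|\le t$.

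Next I would set up the bipartite graph $B$ with parts $I$ and $E(G[X])$, joining $v\in I$ to $e\in E(G[X])$ exactly when both ends of $e$ lie in $N_G(v)$. By the first fact every vertex of $I$ has $B$-degree exactly $t$, and by the second fact every vertex of $E(G[X])$ has $B$-degree at most $t$, since it is joined only to vertices of $I$ that are common neighbours of its two ends. Then for any $S\subseteq I$, counting the edges of $B$ incident with $S$ from both sides yields $t|S|\le t\,|N_B(S)|$, hence $|N_B(S)|\ge|S|$. By Hall's theorem $B$ has a matching saturating $I$, and such a matching is precisely an injection from the singletons of $G-X$ into $E(G[X])$, so their number is at most $|E(G[X])|$. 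The degenerate cases $X=\emptyset$ and $X=V(G)$ need no argument, since a vertex-transitive graph with a triangle has no isolated vertex and nonempty edge set.

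I do not expect a serious obstacle here; the only point requiring care is arranging the bipartite graph so that its two degree bounds point in the right direction, and noticing where the hypotheses are used: vertex-transitivity makes the number of triangles through a vertex a constant $t$, and the existence of a triangle guarantees $t\ge 1$, which is exactly what lets us cancel the factor $t$ in the Hall inequality. Everything else is bookkeeping.
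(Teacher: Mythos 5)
Your proof is correct and rests on the same double count as the paper's: the constant number $t$ of triangles through each vertex bounds from below the triangle--edge incidences generated by the singletons (all of whose neighbours lie in $X$) and from above the incidences absorbed by any single edge of $G[X]$. The paper runs this as a pigeonhole contradiction, whereas you wrap it in Hall's theorem to extract an explicit injection --- a slightly stronger conclusion, though for the stated inequality the case $S=I$ of your Hall computation already gives $|I|\leq |N_B(I)|\leq |E(G[X])|$ and the matching itself is not needed.
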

\begin{proof}
Suppose to the contrary that there is a subset $X\subseteq V(G)$ such that the number of singletons of $G-X$ is more than the number of edges in $G[X]$. Let $t$ be the number of singletons in $G-X$, and $e(X)$ the number of edges in $G[X]$. Then $e(X)<t$. Since $G$ is vertex-transitive and contains a triangle, each vertex in $G$ is contained in a positive constant number $m$ of triangles. Since each triangle of $G$ containing a singleton of $G-X$ must pass through an edge in $G[X]$, there are at least $tm$ triangles passing through an edge in $G[X]$. Since $e(X)<t$, $G[X]$ has an edge $e$ which is contained in more than $m$ triangles. This means that more than $m$ triangles contain both ends of $e$, a contradiction.
\end{proof}

\begin{Lemma}\label{3.3} Let $G$ be a connected 4-regular vertex-transitive triangle-free odd graph. Then $G$ has no two distinct vertices $u$ and $v$ such that $N(u)=N(v)$.
\end{Lemma}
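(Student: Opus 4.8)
The plan is a proof by contradiction that ultimately rests on a divisibility/parity obstruction. Suppose $u$ and $v$ are distinct vertices with $N(u)=N(v)=:\{a,b,c,d\}$. Define a relation $\equiv$ on $V(G)$ by $x\equiv y$ if and only if $N(x)=N(y)$; this is clearly an equivalence relation. The key structural observation is that $\equiv$ is invariant under $\mathrm{Aut}(G)$: for any automorphism $\varphi$ one has $N(\varphi(x))=\varphi(N(x))$, so $\varphi$ permutes the $\equiv$-classes and maps the class of $x$ bijectively onto the class of $\varphi(x)$. Hence, by vertex-transitivity, all $\equiv$-classes share a common cardinality $s$. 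Since $u\equiv v$ and $u\neq v$ we have $s\geq 2$, and since the classes partition $V(G)$ with $|V(G)|$ odd, $s$ is an odd divisor of $|V(G)|$; in particular $s\geq 3$.

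Next I would bound $s$ from above using $4$-regularity. Let $U$ be the $\equiv$-class of $u$. Every $w\in U$ satisfies $N(w)=N(u)=\{a,b,c,d\}$, so each of $a,b,c,d$ is adjacent to every vertex of $U$; as these vertices have degree $4$, this forces $|U|=s\leq 4$. Together with ``$s$ odd and $s\geq 3$'' this yields $s=3$.

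To finish, I would examine the $\equiv$-classes of the four vertices $a,b,c,d$. If $N(y)=N(x)$ for some $x\in\{a,b,c,d\}$, then, since $u\in N(x)$, also $u\in N(y)$, i.e.\ $y\in N(u)=\{a,b,c,d\}$; thus the $\equiv$-class of $x$ is contained in $\{a,b,c,d\}$. Consequently $\{a,b,c,d\}$ is a disjoint union of $\equiv$-classes, each of size $3$ — impossible, since $4$ is not divisible by $3$. This contradiction proves the lemma.

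I expect the only genuinely substantive step to be the second sentence of the first paragraph — recognising that ``having the same neighbourhood'' is an automorphism-invariant equivalence relation, so vertex-transitivity forces all its classes to have equal size; once that is in hand, the remainder is elementary counting with the numbers $2,3,4$. It is worth noting that triangle-freeness and connectedness are not actually needed for this argument: $4$-regularity (giving $s\leq 4$ and $3\nmid 4$) and the odd order (giving $s$ odd) do all the work, and $u\neq v$ automatically excludes $u\sim v$ since $G$ is loopless.
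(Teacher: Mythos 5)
Your proof is correct, and it takes a genuinely different and considerably more elementary route than the paper's. The paper argues in two stages: it first rules out three vertices sharing a neighbourhood by showing this would force $G$ to be a specific $4$-regular graph on $8$ vertices (contradicting connectedness and odd order), and then, for exactly two such vertices, it reconstructs the local structure around $u,v$ (its Fig.~1), counts that $u$ lies on exactly $10$ quadrangles, and derives $4n_{4}=10|V(G)|$, forcing $|V(G)|$ even. Your argument replaces all of this with the single observation that ``$N(x)=N(y)$'' is an $\mathrm{Aut}(G)$-invariant equivalence relation, so by vertex-transitivity all classes have a common size $s$ which divides the odd number $|V(G)|$; the hypothesis gives $s\ge 3$, degree $4$ gives $s\le 4$, hence $s=3$, and then $N(u)$ (of size $4$) would have to be a disjoint union of classes of size $3$ --- impossible. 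Every step checks out: classes are independent sets in a simple graph, each neighbour of $u$ is adjacent to all $s$ members of $u$'s class, and the class of any $x\in N(u)$ is contained in $N(u)$ because $u\in N(x)$. Your closing remark is also accurate: triangle-freeness and connectedness are not needed, and indeed the same divisibility argument shows more generally that a $k$-regular vertex-transitive graph of odd order admits two vertices with identical neighbourhoods only if the common class size is an odd divisor of $k$ exceeding $1$. What the paper's longer argument buys is nothing extra for this lemma; your version is both shorter and strictly more general.
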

\begin{proof} We firstly show that there are no three distinct vertices $x$, $y$ and $z$ in $G$ such that $N(x)=N(y)=N(z)$. Suppose that $x$, $y$ and $z$ are such three distinct vertices in $G$ that $N(x)=N(y)=N(z)=\{a_{1},a_{2},a_{3},a_{4}\}$. Let $a_{5}$ be the neighbor of $a_{1}$ which is different from $x$, $y$ and $z$. By the vertex-transitivity of $G$, there are two distinct vertices $a'_{1}$ and $a''_{1}$ in $\{a_{2},a_{3},a_{4}\}$ such that $N(a_{1})=N(a'_{1})=N(a''_{2})$, say $a'_{1}=a_{2}$ and $a''_{1}=a_{3}$. Hence $a_{2}a_{5},a_{3}a_{5}\in E(G)$. Since $G$ is 4-regular and vertex-transitive, the fourth neighbor of $a_{4}$ must be $a_{5}$. This means that $G[\{x,y,z,a_{1},a_{2},a_{3},a_{4},a_{5}\}]$ is a component of $G$, a contradiction.

Suppose that $u$ and $v$ are two distinct vertices in $G$ such that $N(u)=N(v)=\{b_{1},b_{2},b_{3},b_{4}\}$. Since $G$ contains no triangle, $b_{1}$ has two neighbors in $V(G)\backslash(N(u)\cup\{u,v\})$, say $c_{1}$ and $c_{2}$. By the vertex-transitivity of $G$, there is a vertex $w\in \{b_{2},b_{3},b_{4}\}$ such that $N(w)=N(b_{1})$, say $w=b_{2}$. Now we will consider what will $N(b_{3})$ be. If $|N(b_{3})\cap\{c_{1},c_{2}\}|=2$, then $N(b_{1})=N(b_{2})=N(b_{3})$, a contradiction. If $|N(b_{3})\cap\{c_{1},c_{2}\}|=1$, say $N(b_{3})\cap\{c_{1},c_{2}\}=\{c_{1}\}$, then $N(b_{4})=N(b_{3})$ by the vertex-transitivity of $G$, which implies that $N(u)=N(v)=N(c_{1})$, a contradiction. So $N(b_{3})\cap\{c_{1},c_{2}\}=\emptyset$. Let $N(b_{3})=\{u,v,c_{3},c_{4}\}$. Then $N(b_{3})=N(b_{4})$ by the vertex-transitivity of $G$. Hence the graph showed in Fig. 1 is a subgraph of $G$.

\begin{figure}[h]
\begin{center}
\includegraphics[scale=0.7
]{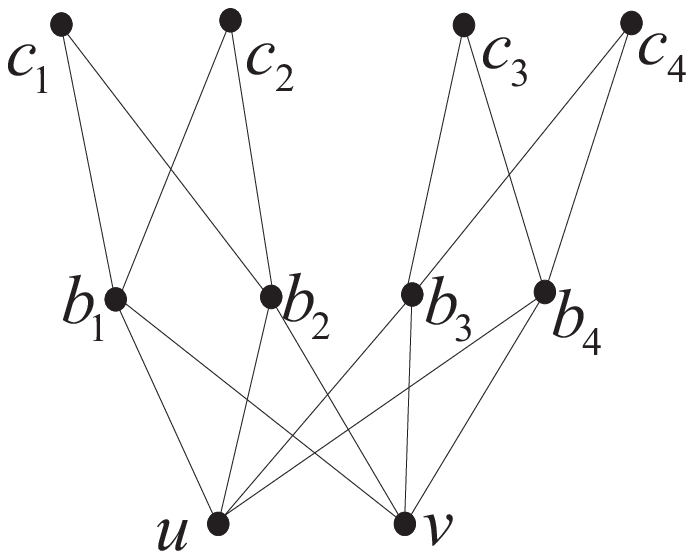}\\{\small{Fig. 1. A subgraph of $G$ with $N(u)=N(v)$.}}
\end{center}
\end{figure}

From Fig. 1, we can figure out that there are exactly 10 quadrangles containing $u$. By the vertex-transitivity of $G$, there are 10 quadrangles containing $v'$ for each vertex $v'\in V(G)$. Let $n_{4}$ be the number of quadrangles in $G$. Then $$4n_{4}=10|V(G)|.$$ It follows that $|V(G)|$ is even, a contradiction.
\end{proof}

\begin{Lemma}\label{3.4} Let $G$ be a connected vertex-transitive odd graph with degree $k=4$ and girth $g=4$. Then, for each edge $e$ in $G$, there are at least two distinct quadrangles containing $e$ and there is another edge $e'$ adjacent to $e$ such that the number of quadrangles containing $e'$ is the same as the number of quadrangles containing $e$.
\end{Lemma}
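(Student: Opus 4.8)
The plan is to exploit the action of $\mathrm{Aut}(G)$ on $E(G)$ together with the fact that $G$, having odd order, has no perfect matching. First I would record the elementary bookkeeping. Being vertex-transitive, $G$ is $4$-regular; being of girth $4$, it is triangle-free and contains a quadrangle, so by vertex-transitivity every vertex lies on the same positive number $m$ of quadrangles. Counting incidences (vertex, quadrangle) gives $4n_{4}=mn$, where $n_{4}$ is the number of quadrangles and $n=|V(G)|$ is odd, whence $4\mid m$ and $m\geq 4$. For an edge $e$ write $q(e)$ for the number of quadrangles through $e$; since a quadrangle through a vertex $u$ uses exactly two of the four edges at $u$, we get $\sum_{e\ni u}q(e)=2m$ for every $u$, and summing over all vertices, $\sum_{e\in E(G)}q(e)=4n_{4}=mn$.

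Next I would determine the orbits of $\mathrm{Aut}(G)$ on $E(G)$. By vertex-transitivity each orbit meets every vertex in a constant number of edges, and these numbers sum to $4$; an orbit meeting each vertex in exactly one edge would be a perfect matching, impossible as $n$ is odd, and an orbit meeting each vertex in three edges would force another to be a perfect matching. Hence either $G$ is edge-transitive, or $G$ has exactly two edge-orbits $O_{1},O_{2}$, each meeting every vertex in two edges, so each $(V,O_{i})$ is a $2$-factor of $G$ --- a disjoint union of cycles, all of one common length by vertex-transitivity. If $G$ is edge-transitive, then $q(e)$ is a constant $q$, so $2nq=mn$ gives $q=m/2\geq 2$, and any edge sharing an endpoint with $e$ serves as $e'$. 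In the two-orbit case, writing $q_{i}$ for the common value of $q$ on $O_{i}$, the identity $\sum_{e\ni u}q(e)=2m$ reads $q_{1}+q_{2}=m\geq 4$; moreover, for any $e\in O_{i}$ and any endpoint $u$ of $e$, the second $O_{i}$-edge at $u$ is an edge $e'$ adjacent to $e$ with $q(e')=q_{i}=q(e)$, which settles the claim about $e'$. It remains to show $q_{1}\geq 2$, and then $q_{2}\geq 2$ by symmetry.

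To rule out $q_{1}\leq 1$ I would argue as follows. If $q_{1}=0$, every quadrangle lies inside $(V,O_{2})$; a $4$-cycle all of whose edges lie in the $2$-factor $(V,O_{2})$ is a connected component of $(V,O_{2})$, so by vertex-transitivity all components of $(V,O_{2})$ are $4$-cycles and $4\mid n$, contradicting that $n$ is odd. If $q_{1}=1$, the same reasoning shows no quadrangle lies entirely in $O_{1}$ or entirely in $O_{2}$; letting $N_{i}$ be the number of quadrangles having exactly $i$ edges in $O_{1}$, comparing $\sum_{i}iN_{i}=q_{1}|O_{1}|=n$ (as $|O_{1}|=n$ and $q_{1}=1$) with $\sum_{i}N_{i}=n_{4}=mn/4\geq n$ forces every quadrangle to contain exactly one $O_{1}$-edge, $m=4$, and $n_{4}=n$. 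Each quadrangle is then a path on four vertices formed by three consecutive edges of $(V,O_{2})$ closed up by a single $O_{1}$-edge, and counting (there are exactly $n$ paths on four vertices in $(V,O_{2})$, all its components having length $\geq 5$) shows this is a bijection. Consequently, along each cycle $x_{0}x_{1}\cdots x_{\ell-1}$ of $(V,O_{2})$ we get $x_{j}x_{j+3}\in O_{1}$ for all $j$, so $G$ contains on $\{x_{0},\dots,x_{\ell-1}\}$ the circulant $C_{\ell}(1,3)$, which is already $4$-regular since $\ell$ is odd and $\ell\geq 5$; as $G$ is connected and $4$-regular this forces $G=C_{n}(1,3)$. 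But triangle-freeness and girth $4$ then force $n\geq 11$, and a direct quadrangle count in $C_{n}(1,3)$ with $n\geq 11$ shows every difference-$3$ edge lies on exactly three quadrangles, i.e. $q_{1}=3\neq 1$ --- a contradiction. Hence $q_{1},q_{2}\geq 2$, so every edge of $G$ lies on at least two quadrangles, completing the proof.

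The step I expect to be the main obstacle is the subcase $q_{1}=1$: unlike $q_{1}=0$ it does not collapse to a one-line divisibility contradiction, and one is forced to identify $G$ as the circulant $C_{n}(1,3)$ and then verify by an explicit count that its difference-$3$ edges lie on three, not one, quadrangles. Everything else --- the edge-orbit dichotomy forced by odd order, the two incidence identities, the edge-transitive case, and the production of $e'$ --- is routine.
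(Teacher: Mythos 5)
Your proof is correct, and it reaches the same two counting identities as the paper ($4\mid m$, $m\ge 4$ from $4n_4=mn$ with $n$ odd, and $\sum_{e\ni u}q(e)=2m$), but the route through the hard parts is genuinely different. Where you use the orbits of $\mathrm{Aut}(G)$ on $E(G)$ and the degree-sum $4=\sum_O d_O(v)$ to force either edge-transitivity or two $2$-factor orbits, the paper works with the level sets of $e\mapsto q(e)$ at a single vertex: if some value $t_j$ occurred on exactly one edge at each vertex, those edges would form a perfect matching, impossible for odd $n$; hence the four local values pair up and $e'$ exists. The two devices are morally the same (both are the "no perfect matching" obstruction), and both deliver the adjacent edge $e'$. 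The real divergence is in excluding $q=1$: the paper stays local, drawing the two possible configurations around $v$ (its Fig.~2) and showing that the vertex $v_3$ cannot be in the same vertex-orbit as $v$ because $e_3$ lies on a common quadrangle with each of the other three edges at $v_3$ while no edge at $v$ has that property. You instead run a global classification: $N_0=N_4=0$ from the $2$-factor structure, the equality chain $n=\sum iN_i\ge\sum N_i=n_4=mn/4\ge n$ forcing $m=4$ and a bijection between quadrangles and $3$-edge subpaths of the $O_2$-cycles, hence $G\cong C_n(1,3)$ with $n\ge 11$ odd, where a direct count gives $3$ quadrangles through each difference-$3$ edge (I checked this count; it is right). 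Your argument is longer in that case but more structural and self-checking; the paper's is shorter but relies on a somewhat terse picture-based case analysis. Your exclusion of $q=0$ (a quadrangle inside a $2$-factor is a whole component, forcing $4\mid n$) is also cleaner than the paper's bound $t_3\le 3<m$. Both proofs are sound.
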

\begin{proof} Let $v$ be a vertex in $G$ incident to edges $e_{1}$, $e_{2}$, $e_{3}$ and $e_{4}$. Let $t_{i}$ be the number of quadrangles containing $e_{i}$ for $i=1,2,3,4$. By the vertex-transitivity of $G$, we only need to show that for each $i$, $t_{i}\geq2$ and there is an element $j\in\{1,2,3,4\}\backslash\{i\}$ such that $t_{j}=t_{i}$.

If there is an number $t_{j}$ such that $t_{j}\neq t_{i}$ for any $i\in\{1,2,3,4\}\backslash\{j\}$, then, by the vertex-transitivity of $G$, each vertex in $G$ is only incident to one edge contained exactly in $t_{j}$ quadrangles. This means that the set of edges contained exactly in $t_{j}$ quadrangles is a perfect matching of $G$, contradicting that $G$ is odd.

So either $t_{1}$, $t_{2}$, $t_{3}$ and $t_{4}$ have a common value, or two of them have a common value and the other two have another common value. Without loss of generality, we assume that $t_{1}=t_{2}$ and $t_{3}=t_{4}$.

Since $G$ is vertex-transitive and contains a quadrangle, each vertex is contained in a positive constant number of quadrangles. Let $n_{4}$ be the number of quadrangles in $G$ and $m$ be the number of the quadrangles containing $v$. Then $4n_{4}=m|V(G)|$. It follows that $4|m$ since $|V(G)|$ is odd. Hence $m\geq 4$. On the other hand, since each quadrangle containing $v$ passes through two edges incident to $v$, $2m=t_{1}+t_{2}+t_{3}+t_{4}=2t_{1}+2t_{3}$. Then $t_{1}+t_{3}=m\geq4$.

If $t_{1}=0$, then $t_{3}=m\geq4$. In this case, every quadrangle containing $v$ passes through $e_{3}$ and $e_{4}$. Since $G$ is 4-regular, $t_{3}=t_{4}\leq3$, a contradiction.

\begin{figure}[h]
\begin{center}
\includegraphics[scale=0.7
]{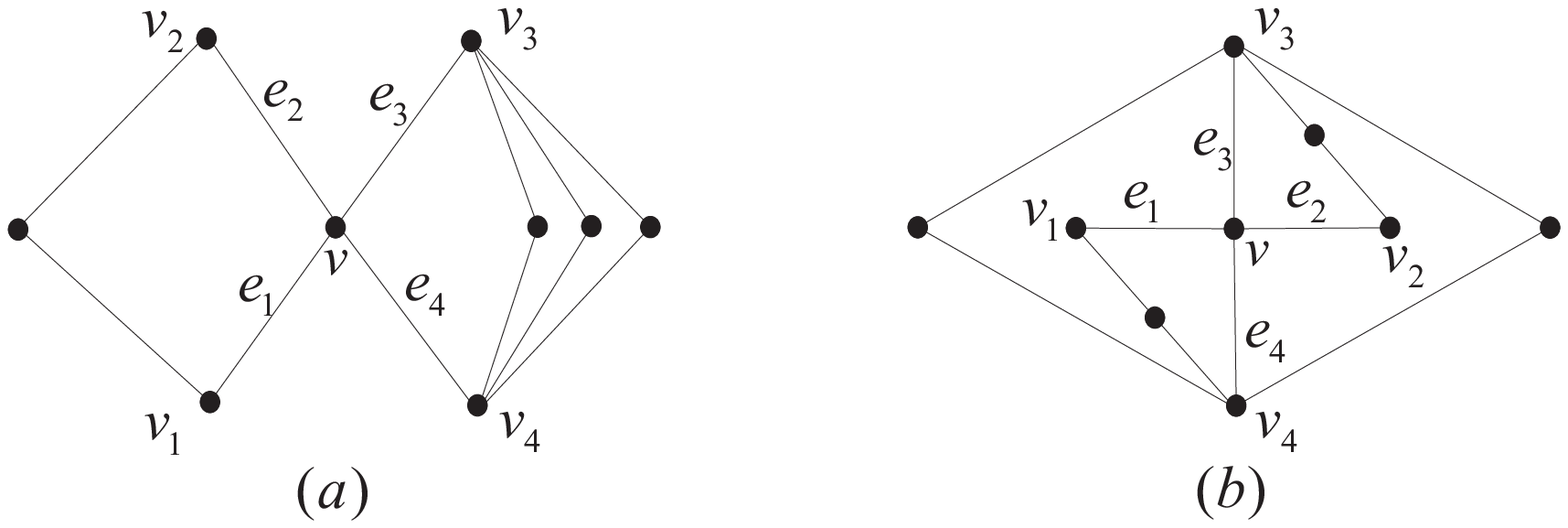}\\{\small{Fig. 2. The two possible subgraphs of $G$ with $t_{1}=1$ and $t_{3}\geq3$.}}
\end{center}
\end{figure}

If $t_{1}=1$, then $t_{3}=m-1\geq3$. If $e_{1}$ and $e_{2}$ are in a common quadrangle, then there are three quadrangles containing $e_{3}$ and $e_{4}$, see Fig. 2 ($a$). If there is no quadrangle containing $e_{1}$ and $e_{2}$, then the quadrangle containing $e_{1}$ is edge-disjoint from the quadrangle containing $e_{2}$ and there are two quadrangles containing $e_{3}$ and $e_{4}$ since $t_{3}=t_{4}\geq3$ and $t_{1}=t_{2}=1$, see Fig. 2 ($b$). In each case, we can see that $e_{3}$ and any of other three edges incident to $v_{3}$ are contained in a quadrangle. But there is no such edge incident to $v$ that it has this property. This means that there is no automorphism $\varphi$ of $G$ such that $\varphi(v)=v_{3}$, a contradiction.

So $t_{1}\geq2$. By a similar argument as above, $t_{3}\geq2$.
\end{proof}
\section{Proof of Theorem \ref{100}}
The ``only if'' part is trivial by Lemma \ref{4}. In this section we will mainly finish the ``if'' part.

Suppose that $G$ is not a cycle and is not 3-factor-critical. By Lemma \ref{2.1}, there is a set $X\subseteq V(G)$ with $|X|\geq3$ such that $c_{0}(G-X)>|X|-3$. By Theorem \ref{1.1}, $G$ is factor-critical. So $c_{0}(G-X)\leq|X|-1$ by Lemma \ref{2.1}. We have $$|X|-3<c_{0}(G-X)\leq|X|-1.$$ Since $c_{0}(G-X)$ and $|X|$ have different parity, $$c_{0}(G-X)=|X|-1.$$
Let $H_{1}$, $H_{2}$, $\ldots$, $H_{t}$ be the odd components of $G-X$ where $t=c_{0}(G-X)$.

Since $G$ is vertex-transitive, $G$ is regular. Let $k$ be the regularity of $G$. Noting that $G$ is odd and is not a cycle, $k$ is even and $k\geq4$. It follows that there is no imprimitive block of $G$ which is a clique of size $k$. Otherwise, $|V(G)|$ will be a multiple of $k$, contradicting that $|V(G)|$ is odd. Thus $G$ is super-$\lambda$ by Lemma \ref{2.5}.

\textbf{Claim 1.} Every component of $G-X$ is odd.

If $G-X$ has an even component $H_{0}$, then $d(V(H_{i}))\geq k$ for $0\leq i\leq t$ since $\lambda(G)=k$ by Lemma \ref{2.4}. Thus, $$k|X|=k(t+1)\leq\sum_{i=0}^{t}d(V(H_{i}))\leq d(X)\leq k|X|,$$
which implies that $d(V(H_{0}))=k$ and $X$ is an independent set of $G$. Hence $\nabla(V(H_{0}))$ isolates a vertex $v$ in $G$ since $G$ is super-$\lambda$, and $v\in X$. This means that $G[V(H_{0})\cup\{v\}]$ is a component of $G$, a contradiction. Claim 1 holds.

A graph $D$ is called {\em nontrivial} if $|V(D)|\geq2$. Let $g$ be the girth of $G$.

\textbf{Claim 2.} If $g\geq4$, then $G-X$ has exactly one nontrivial component $H$, and $d(V(H))=2k$.

Now we suppose that $g\geq4$. By Claim 1, $H_{1}$, $H_{2}$, $\ldots$, $H_{t}$ are all the components of $G-X$. Without loss of generality, we assume that $H_{1}$, $H_{2}$, $\ldots$, $H_{p}$ are nontrivial components and $H_{p+1}$, $H_{p+2}$, $\ldots$, $H_{t}$ are singletons. For $i=1,2,\dots,p$, $|V(H_{i})|\geq3$ and $|\overline{V(H_{i})}|\geq2|X|-2\geq4$. If $G[\overline{V(H_{i})}]$ is connected, then $\nabla(V(H_{i}))$ is a restricted edge-cut of $G$ and hence $d(V(H_{i}))>\lambda_{2}(G)=2k-2$ since $G$ is super-$\lambda_{2}$ by Theorem \ref{2.8}. If $G[\overline{V(H_{i})}]$ is disconnected, then $d(V(H_{i}))\geq2\lambda(G)=2k>2k-2$ since $\lambda(G)=k$ by Lemma \ref{2.4}. So $d(V(H_{i}))>2k-2$, for $i=1,2,\dots,p$. We have
$$p(2k-2)+k(t-p)<\sum_{i=1}^{p}d(V(H_{i}))+k(t-p)=\sum_{i=1}^{t}d(V(H_{i}))=d(X)\leq k|X|.$$
Note that $t=c_{0}(G-X)=|X|-1$. It follows that $p<\frac{k}{k-2}\leq2$ and $\sum_{i=1}^{p}d(V(H_{i}))\leq k(p+1)$.

If $p=0$, then $\overline{X}$ is an independent set of size $(|V(G)|-1)/2$ in $G$, which contradicts that $\alpha(G)<(|V(G)|-1)/2$ by Lemma \ref{3.1}.

So $p=1$.  Then $2k-2<d(V(H_{1}))\leq2k$. Since $d(V(H_{1}))=k|V(H_{1})|-2|E(H_{1})|$ is even, $d(V(H_{1}))=2k$. Claim 2 is proved.

\textbf{Claim 3.} If $g\geq4$, then $g=4$ and $k=4$.

Suppose that $g\geq4$. By Claims 1 and 2, $G-X$ has exactly one nontrivial component $H$ and $H$ satisfies $|V(H)|$ is odd and $d(V(H))=2k$. Hence $|V(G)|\geq|V(H)|+2|X|-2\geq7$. By Lemma \ref{2.10}, either $\lambda_{3}(G)=3k-4$ or $\lambda_{3}(G)$ is a divisor of $|V(G)|$. Since $k$ is even, $d(Y)=k|Y|-2|E(G[Y])|$ is even for any $Y\subseteq V(G)$. Hence $\lambda_{3}(G)$ is even. Then $\lambda_{3}(G)$ is not a divisor of $|V(G)|$ since $|V(G)|$ is odd. Thus $\lambda_{3}(G)=3k-4$.

Note that $\lambda(G)=k$ by Lemma \ref{2.4}, $\lambda_{2}(G)=2k-2$ by Theorem \ref{2.8}, $|V(H)|\geq3$ and $|\overline{V(H)}|=2|X|-2\geq4$. We claim that $G[\overline{V(H)}]$ is connected. Otherwise, if $G[\overline{V(H)}]$ has exactly two components, then $d(V(H))\geq\lambda(G)+\lambda_{2}(G)=3k-2>2k$, a contradiction; if $G[\overline{V(H)}]$ has at least three components, then $d(V(H))\geq3\lambda(G)=3k>2k$, a contradiction. Thus $\nabla(V(H))$ is a 3-restricted edge cut of $G$. Then
$$2k=d(V(H))\geq\lambda_{3}(G)=3k-4,$$
which implies that $k=4$.

Next we will show that $\nabla(V(H))$ is also a cyclic edge-cut of $G$. Note that $|V(H)|\geq3$ and $k=4$. Then $|E(H)|=\frac{1}{2}(k|V(H)|-2k)\geq|V(H)|-1$ and equality holds only for $|V(H)|=3$. If $|V(H)|=3$, then $X$ is an independent set of size $(|V(G)|-1)/2$ in $G$, which contradicts that $\alpha(G)<(|V(G)|-1)/2$ by Lemma \ref{3.1}. So $|V(H)|>3$ and $|E(H)|>|V(H)|-1$. It follows that $H$ contains a cycle. Since $|\overline{V(H)}|\geq4$, $|E(G[\overline{V(H)}])|=\frac{1}{2}(k|\overline{V(H)}|-2k)>|\overline{V(H)}|-1$. This implies that $G[\overline{V(H)}]$ also contains a cycle. Thus $\nabla(V(H))$ is a cyclic edge-cut of $G$.

If $g\geq5$, then, by Lemma \ref{2.11},
$$d(V(H))\geq\lambda_{c}(G)=(k-2)g\geq5k-10>2k,$$
a contradiction. So $g=4$. Claim 3 is proved.

By Claim 3, $g=3$, or $g=4$ and $k=4$. Next, we will distinguish the two cases to produce contradictions.

\textbf{Case 1.} $g=3$.

Let $t$ and $e(X)$ denote the number of singletons in $G-X$ and the number of edges in $G[X]$, respectively. We will show that $t>e(X)$. This would contradict Lemma \ref{3.2}.

Since $G$ is super-$\lambda$ and $k$ is even, $d(V(H_{i}))\geq k+2$, for any nontrivial component $H_{i}$ of $G-X$. Let $p'$ be the number of nontrivial components of $G-X$. We have
 $$p'(k+2)+k(t-p')\leq\sum_{i=1}^{t}d(V(H_{i}))=d(X)=k|X|-2e(X).$$
Note that $t=c_{0}(G-X)=|X|-1$. It follows that $p'\leq\frac{k}{2}-e(X)$.

Now we show that $|X|\geq k$. If $k$ = 4 or 6, then $\kappa(G)=k$ by Lemma \ref{2.3}. Hence $|X|\geq\kappa(G)=k$ since $X$ is a vertex-cut of $G$. Now assume that $k\geq8$. By Lemma \ref{2.2}, $|X|\geq\kappa(G)>\frac{2}{3}k$. Note that $c_{0}(G-X)=|X|-1$ and $p'\leq\frac{1}{2}k$. We have
$$t=c_{0}(G-X)-p'>\frac{2}{3}k-1-\frac{k}{2}=\frac{k}{6}-1>0,$$ which implies that there is a singleton $u$ in $G-X$. Hence $|X|\geq|N(u)|\geq k$.

Therefore, we have
$$t=c_{0}(G-X)-p'\geq k-1-(\frac{k}{2}-e(X))=e(X)+\frac{k}{2}-1\geq e(X)+1.$$

\textbf{Case 2.} $g=4$ and $k=4$.

By Claims 1 and 2, $G-X$ has exactly one nontrivial component $H$ and $H$ satisfies $|V(H)|$ is odd and $d(V(H))=2k$. It follows that $X$ is an independent set of $G$. From the argument in Claim 3, we know that $|V(H)|>3$ and $G[\overline{V(H)}]$ is connected.

\textbf{Claim 4.} $|\nabla(v)\cap\nabla(V(H))|\leq2$ for any $v\in V(G)$.

Noting that $G$ is 4-regular, $H$ and $G[\overline{V(H)}]$ are connected, we have that $|\nabla(v)\cap\nabla(V(H))|\leq3$ for any $v\in V(G)$. If there is $u'\in V(G)$ with $|\nabla(u')\cap\nabla(V(H))|=3$, then either $\nabla(V(H)\backslash\{u'\})$ or $\nabla(V(H)\cup\{u'\})$ is a restricted edge-cut of size $2k-2$ in $G$. Since $G$ is super-$\lambda_{2}$ by Theorem \ref{2.8}, either $|V(H)\backslash\{u'\}|=2$ or $|\overline{V(H)}\backslash\{u'\}|=2$, contradicting that $|V(H)|>3$ and $|\overline{V(H)}|=2|X|-2\geq4$. Claim 4 is proved.

By Claim 4, $|N(V(H))|\geq4$. If $|N(V(H))|=4$ and $|X|\geq6$, then let $H'=G[V(H)\cup N(V(H))]$ and $X'$ be the set of singletons in $G-X$. Then $|X'|=c_{0}(G-X)-1=|X|-2\geq4$, $c_{0}(G-X')=|X|-3=|X'|-1$ and $H'$ is the nontrivial component of $G-X'$ with $d(V(H'))=d(V(H)\cup N(V(H)))=4\times4-8=8=2k$. Take $H'$ as $H$ and $X'$ as $X$. Repeat this operation until $|N(V(H))|=4$ and $|X|\geq6$ do not hold.

So we assume that $|N(V(H))|>4$, or $|N(V(H))|=4$ and $|X|<6$.

If $|N(V(H))|=4$, then $|X|$ = 4 or 5. If $|X|=4$, then $G-X$ has two singletons $a$ and $b$ with $N(a)=N(b)=X$, which contradicts Lemma \ref{3.3}. If $|X|=5$, then $G-X$ has exactly three singletons, which form a vertex-cut of size 3 in $G$, contradicting that $\kappa(G)=k=4$ by Lemma \ref{2.3}.

So $|N(V(H))|>4$. Let $G'=G[\nabla(V(H))]$ be the induced subgraph by edges in $\nabla(V(H))$. Then there is a vertex $v_{0}\in V(G')\cap X$ with $d_{G'}(v_{0})=1$. By Claim 4, $d_{G'}(v)\leq2$, for any $v\in V(G')$. Hence every component of $G'$ is a path. Let $l$ be the length of the component $P$ of $G'$ passing through $v_{0}$.

If $l=1$, then there is no quadrangle in $G$ containing the edge in $P$, contradicting Lemma \ref{3.4}.

If $l=2$, let $P=v_{0}v_{1}v_{2}$ and $N(v_{1})=\{v_{0},v_{2},u_{1},u_{2}\}$. Then $v_{0},v_{2}\in X$ and $v_{1}$, $u_{1},u_{2}\in V(H)$. It follows that there is no quadrangle in $G$ containing an edge in $\{v_{0}v_{1},v_{1}v_{2}\}$ and an edge in $\{v_{1}u_{1},v_{1}u_{2}\}$. Note that there are two distinct quadrangles containing $v_{1}u_{1}$ by Lemma \ref{3.4}. Then the two quadrangles also contain $v_{1}u_{2}$ and cover three edges incident to $u_{1}$. But any two quadrangles containing $v_{1}$ cover two or four edges incident to $v_{1}$. This means that there is no automorphism $\varphi$ of $G$ such that $\varphi(v_{1})=u_{1}$, a contradiction.

So $l\geq3$. Let $P=v_{0}v_{1}v_{2}v_{3}\ldots v_{l}$ and $N(v_{0})=\{v_{1},w_{1},w_{2},w_{3}\}$. Let $s_{1}$, $s_{2}$, $s_{3}$ and $s_{4}$ be the numbers of quadrangles containing $v_{0}v_{1}$, $v_{0}w_{1}$, $v_{0}w_{2}$ and $v_{0}w_{3}$, respectively. Since each quadrangle containing $v_{0}v_{1}$ must pass through $v_{1}v_{2}$  and a vertex in $\overline{X}\backslash V(H)$ and $d_{G[\overline{V(H)}]}(v_{2})=2$, $s_{1}\leq2$. By Lemma \ref{3.4}, $s_{1}\geq2$. Thus $s_{1}=2$. Assume that the two quadrangles containing $v_{0}v_{1}$ are $v_{0}v_{1}v_{2}w_{1}v_{0}$ and $v_{0}v_{1}v_{2}w_{2}v_{0}$. Then, by Lemma \ref{3.4}, $s_{1}=s_{4}=2$ and $s_{2}=s_{3}>2$. Furthermore, let $n_{4}$ be the number of quadrangles in $G$ and $m$ be the number of quadrangles containing $v_{0}$. By the vertex-transitivity of $G$, $4n_{4}=m|V(G)|$. Hence $4|m$ since $|V(G)|$ is odd. Note that $2m=s_{1}+s_{2}+s_{3}+s_{4}=2s_{1}+2s_{2}$. Then $4|(s_{1}+s_{2})$. Thus $s_{2}\geq6$ since $s_{1}=2$ and $s_{2}>2$.

\begin{figure}[h]
\begin{center}
\includegraphics[scale=0.7
]{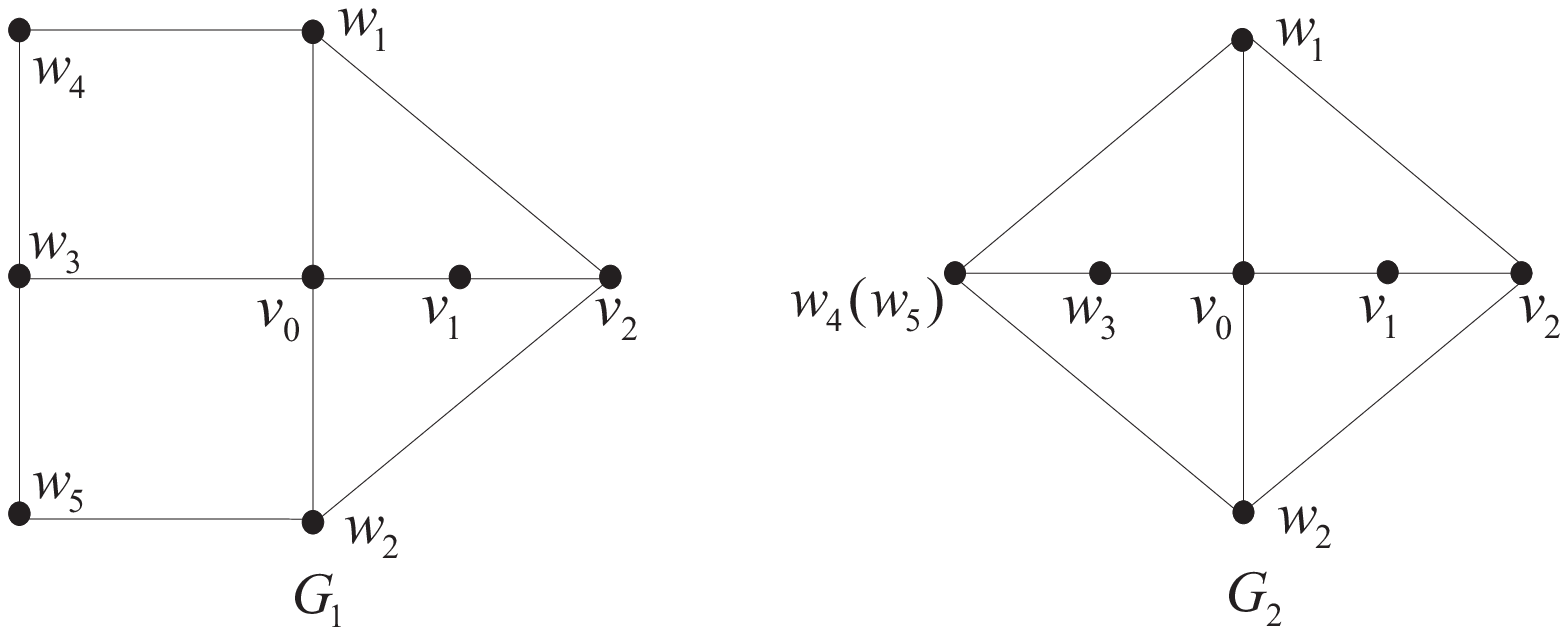}\\{\small{Fig. 3. The two possible induced subgraphs of $G$ with $l\geq3$.}}
\end{center}
\end{figure}

Let $v_{0}w_{1}w_{4}w_{3}v_{0}$ and $v_{0}w_{2}w_{5}w_{3}v_{0}$ be the two quadrangles containing $v_{0}w_{3}$. Note that $s_{1}=s_{4}=2$ and $g=4$. If $w_{4}\neq w_{5}$, then $G_{1}$ showed in Fig. 3 is an induced subgraph of $G$ by $\{v_{0},v_{1},v_{2},w_{1},w_{2},w_{3},w_{4},w_{5}\}$; if $w_{4}=w_{5}$, then $G_{2}$ showed in Fig. 3 is an induced subgraph of $G$ by $\{v_{0},v_{1},v_{2},w_{1},w_{2},w_{3},w_{4}\}$. It is easy to see that any quadrangle which contains $v_{0}$ and is not in $G_{i}$ must pass through $v_{0}w_{1}$ and $v_{0}w_{2}$. Noting that $G$ is 4-regular, the number of such quadrangles is at most 1. Hence $s_{2}\leq5$ since the numbers of quadrangles containing $v_{0}w_{1}$ in $G_{1}$ and in $G_{2}$ are, respectively, 3 and 4, a contradiction.

\end{document}